\newtheorem{theorem}{Theorem}
\newtheorem{lemma}{Lemma}
\title{\LARGE \bf
Output-Feedback Stabilization for a Class of Linear Parabolic Systems
}
\author{Agus Hasan
\thanks{The author is with Maersk Mc-Kinney Moller Institute,
        University of Southern Denmark, 5230 Odense, Denmark.
        Email: {\tt\small agha@mmmi.sdu.dk}.}
}
\begin{document}

\maketitle
\thispagestyle{empty}
\pagestyle{empty}

\begin{abstract}
We consider output-feedback stabilization problems for a class of two-component linear parabolic systems with boundary actuation and measurement. The state-feedback control laws are obtained using backstepping method and require measurement of the state at each point in the domain. To this end, backstepping observers are designed for both anti-collocated and collocated sensors and actuators. Furthermore, we show the closed-loop systems consisting of the plant, the backstepping control laws, and the observer is exponentially stable. The backstepping method is used to obtain both control and observer kernels. The kernels are the solution of $4\times4$ systems of second-order hyperbolic linear PDEs whose well-posedness is shown.
\end{abstract}

\section{INTRODUCTION}

Many physical processes can be modeled by parabolic differential equations such as heat diffusion, flow in porous media, and ocean acoustic propagation. Some control methods have also been developed and depending on where the actuators are placed, the control methods can be distinguished as boundary control and in-domain control. A successful control design for in-domain control has been developed by \cite{Cris}. In many applications, however, boundary control is generally considered to be physically more realistic since actuation and sensing are usually located at the boundaries. It is also a harder problem compared to an in-domain control since the input and output operators are unbounded operators.

In control literatures, the boundary control problems for parabolic PDEs are subject to different approaches, e.g., finite-element approximation was proposed by \cite{Lasiecka}, a semigroup approach was proposed by \cite{Tri}, while \cite{Seid} studied null-controllability of the boundary control problem. A more recent result on tracking control for boundary controlled of parabolic PDEs can be found in \cite{Meurer}.

A systematic method for boundary control of PDEs was developed by \cite{Krstic}. The method is called backstepping and is primarily used for nonlinear ODE systems in strict-feedback form. Thus, the method represents a major shift from finite-dimensional to infinite-dimensional systems. The backstepping method has been successfully used as a tool for control design and state estimation of many type of linear PDEs such as transport equation (\cite{Mail1,Mail2,Mail3}), wave equation (\cite{Mail4,Mail5}), Kortweg-de Vries equation (\cite{Mail6,Mail7}), and a more relevant to the present paper is the reaction-diffusion equation (parabolic type of PDEs). The method is also has been extended to nonlinear parabolic systems with Volterra nonlinearities \cite{Rafa}.

The method uses change of variable by shifting the system state using a Volterra operator. A property of Volterra operator is that the state transformation is triangular which ensures the invertibility of the change of the variable. Furthermore, using method of successive approximation \cite{Andrey2} or Marcum Q-functions \cite{Vazquez2} one may found an explicit expression for the transformation kernel. Thus, the feedback law can be constructed explicitly and the closed-loop solutions can be found in closed-form.

The paper is organized as follows. The problem is stated in section II. In section III, using the backstepping method, we derived a state-feedback control law and proving exponential stability of the closed-loop system. In section IV, we present our boundary observer and prove convergence of observer estimates to the real states. Furthermore, two cases are considered. The first case is when the sensor and the actuator are placed at different boundary (anti-collocated setup) and the second case is when both of them are placed at the same boundary (collocated setup). In section V we combine the full-state feedback control law and the observer to obtain an output-feedback control law. The well-posedness of the kernel equations is presented in section VI. Finally, conclusions are stated in section VII.

\section{PROBLEM FORMULATION}

We consider output-feedback stabilization problems for a class of two-component linear parabolic systems
\begin{eqnarray}
w_t(x,t)&=&w_{xx}(x,t)+\Sigma w(x,t)\label{main1}
\end{eqnarray}
where
$
w(x,t)= \begin{pmatrix}
  u(x,t)\;\;v(x,t)
 \end{pmatrix}^T$, $x\in[0,1]$, $t>0$, and $\Sigma= \begin{pmatrix}
  0 & \lambda_1 \\
  \lambda_2 & 0
 \end{pmatrix}
$. Systems (\ref{main1}) arise naturally in systems consisting of two interacting components and widely used to describe phenomena in variety of biological, chemical, and physical systems. For instance, (\ref{main1}) can be used to model Turing instability \cite{Turing}, instability of slime mold amoebae aggregation \cite{Keller}, and chemical reaction of two components \cite{Prigo}. The boundary conditions are given as\footnote{The notations $\mathbf{0}$ and $\mathbf{1}$ will be used to denote vectors with all entries 0 and 1, respectively.}
\begin{eqnarray}
w_x(0,t)=\mathbf{0}\;\;\;w(1,t)=U(t)=\begin{pmatrix}
  U_1(t) \\
  U_2(t)
 \end{pmatrix}\label{bc2}
\end{eqnarray}
where $U_1$ and $U_2$ are control laws which should be designed such that the equilibrium of the system (\ref{main1}) is exponentially stable in some norms (e.g., the $L^2$-norm).

\section{FULL-STATE FEEDBACK CONTROL LAW}

The main idea of the backstepping method is to design control law by mapping the original system (\ref{main1}) into the so-called target system. The coordinate transformation is invertible so that the stability of the target system translates into stability of the closed-loop system consisting of the plant plus boundary feedback \cite{Krstic}. In this case, we choose the following target system

\begin{eqnarray}
\gamma_t(x,t)&=&\gamma_{xx}(x,t)\label{maint1}\\
\gamma_x(0,t)&=&\mathbf{0}\;\;\;\gamma(1,t)=\mathbf{0}\label{tbc1}
\end{eqnarray}

where $
\gamma(x,t)= \begin{pmatrix}
  \alpha(x,t) \\
  \beta(x,t)
\end{pmatrix}$. Using Lyapunov stability analysis, it is easy to show that for any initial condition $(\alpha_0,\beta_0)\in L^2(0,1)$, the equilibrium of the target system (\ref{maint1}) with boundary conditions (\ref{tbc1}) is exponentially stable in the sense of $L^2$-norm. Note that using separation of variables it is also easy to obtain the explicit solution of the target system.

\subsection{Backstepping transformation}

To map the original system (\ref{main1}) into the target system (\ref{maint1}), we use the following integral operator state transformations \cite{Vazquez}
\begin{eqnarray}
\gamma(x,t)=w(x,t)-\int_0^x\! K(x,y)w(y,t) \,\mathrm{d}y\label{trans1}
\end{eqnarray}
where $K(x,y)= \begin{pmatrix}
  K^{uu}(x,y) & K^{uv}(x,y) \\
  K^{vu}(x,y) & K^{vv}(x,y)
 \end{pmatrix}$. Substituting transformation (\ref{trans1}) into target system (\ref{maint1})-(\ref{tbc1}), using plant equation (\ref{main1})-(\ref{bc2}), and integration by parts, one obtains the equations that the kernels must satisfy
\begin{eqnarray}
\bar{K}_{xx}(x,y)-\bar{K}_{yy}(x,y)&=&\Lambda\bar{K}(x,y)\label{ker1}\\
\bar{K}(x,x)&=&\left(0\;-\frac{\lambda_1}{2}x\;-\frac{\lambda_2}{2}x\;0\right)^T\label{gau}\\
\bar{K}_y(x,0)&=&\mathbf{0}\label{bcker6}
\end{eqnarray}
where
\begin{eqnarray}
\bar{K}(x,y)&=&\left(K^{uu}(x,y)\;K^{uv}(x,y)\;K^{vu}(x,y)\;K^{vv}(x,y)\right)^T\\
\Lambda&=&\begin{pmatrix}
0&\lambda_2&0&0\\
\lambda_1&0&0&0\\
0&0&0&\lambda_2\\
0&0&\lambda_1&0
\end{pmatrix}
\end{eqnarray}
Note that, this system evolving on triangular domain $\Upsilon=\{(x,y)\in\mathbb{R}^2|0\leq y\leq x\leq1\}$ since boundary condition (\ref{gau}) is on the characteristic (Goursat-type).

\subsection{Inverse transformation}

To ensure the target system and the closed-loop system have equivalent stability properties, integral transformation (\ref{trans1}) has to be invertible. In our case, the inverse is given by
\begin{eqnarray}
w(x,t)=\gamma(x,t)+\int_0^x\! L(x,y)\gamma(y,t) \,\mathrm{d}y\label{invtrans1}
\end{eqnarray}
where $L(x,y)= \begin{pmatrix}
  L^{\alpha\alpha}(x,y) & L^{\alpha\beta}(x,y) \\
  L^{\beta\alpha}(x,y) & L^{\beta\beta}(x,y)
\end{pmatrix}$. Again, substituting transformation (\ref{invtrans1}) into target system (\ref{maint1})-(\ref{tbc1}), using plant equation (\ref{main1})-(\ref{bc2}), and integration by parts, one obtains the equations that the inverse kernels must satisfy
\begin{eqnarray}
\bar{L}_{xx}(x,y)-\bar{L}_{yy}(x,y)&=&-\Psi\bar{L}(x,y)\\
\bar{L}(x,x)&=&\left(0\;-\frac{\lambda_1}{2}x\;-\frac{\lambda_2}{2}x\;0\right)^T\\
\bar{L}_y(x,0)&=&\mathbf{0}
\end{eqnarray}
where
\begin{eqnarray}
\bar{L}(x,y)&=&\left(L^{\alpha\alpha}(x,y)\;L^{\alpha\beta}(x,y)\;L^{\beta\alpha}(x,y)\;L^{\beta\beta}(x,y)\right)^T\\
\Psi&=&\begin{pmatrix}
0&0&\lambda_1&0\\
0&0&0&\lambda_1\\
\lambda_2&0&0&0\\
0&\lambda_2&0&0
\end{pmatrix}
\end{eqnarray}
Existence of $\bar{K}(x,y)$ and $\bar{L}(x,y)$ are given in section 6.

\subsection{Control law}

If the integral transformation (\ref{trans1}) evaluated at $x=1$, we get
\begin{eqnarray}
U(t)=\int_0^1\! K(1,y)w(y,t) \,\mathrm{d}y\label{con1}
\end{eqnarray}
This control law is our full-state feedback control law which requires the measurement of the state at each point in the domain. The summary of this section is stated in the following theorem.
\begin{theorem}
Let $K(x,y)$ be the solution of (\ref{ker1})-(\ref{bcker6}). Then for any initial condition $w_0\in L^2(0,1)$ which compatible with boundary condition (\ref{bc2}), system (\ref{main1}) has a unique classical solution $w\in C^{2,1}((0,1)\times(0,\infty))$. Additionally, the origin, $w\equiv0$, is exponentially stable in the $L^2$-sense.
\end{theorem}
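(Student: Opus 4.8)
The plan is to exploit the backstepping transformation \eqref{trans1} together with the established well-posedness of the kernel equations \eqref{ker1}--\eqref{bcker6} (deferred to Section VI) to transfer the known properties of the target system \eqref{maint1}--\eqref{tbc1} back to the plant. First I would invoke the result of Section VI to assert that $\bar K(x,y)$ exists, is unique, and is (at least) twice continuously differentiable on the triangle $\Upsilon$; the same for the inverse kernel $\bar L(x,y)$. Because $K$ is bounded and smooth on the compact set $\Upsilon$, the Volterra operator $w\mapsto w-\int_0^x K(x,y)w(y,t)\,\mathrm dy$ is a bounded, boundedly invertible map on $L^2(0,1)$ (the inverse being given explicitly by \eqref{invtrans1}), and the same statement holds on $H^1$ and $H^2$. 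Thus $w_0\in L^2(0,1)$ corresponds to $\gamma_0=w_0-\int_0^\cdot K(\cdot,y)w_0(y)\,\mathrm dy\in L^2(0,1)$, and compatibility of $w_0$ with \eqref{bc2} translates into compatibility of $\gamma_0$ with \eqref{tbc1}.

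Next I would establish existence, uniqueness and regularity for the target system \eqref{maint1}--\eqref{tbc1}. This is just the decoupled heat equation for each component $\alpha,\beta$ with a Neumann condition at $x=0$ and a Dirichlet condition at $x=1$; by standard parabolic theory (e.g. eigenfunction expansion in the basis $\{\cos((n+\tfrac12)\pi x)\}$, or semigroup theory for the analytic heat semigroup) it has a unique solution $\gamma\in C([0,\infty);L^2)$, which for $t>0$ is smooth, in particular $\gamma\in C^{2,1}((0,1)\times(0,\infty))$, with the instantaneous smoothing of analytic semigroups. Moreover the Lyapunov/energy estimate mentioned in the text gives, via $\tfrac{\mathrm d}{\mathrm dt}\|\gamma\|_{L^2}^2=-2\|\gamma_x\|_{L^2}^2$ and the Poincaré-type inequality $\|\gamma_x\|_{L^2}^2\ge c\|\gamma\|_{L^2}^2$ valid under the mixed boundary conditions \eqref{tbc1} (with $c=\pi^2/4$ from the first eigenvalue), the exponential decay $\|\gamma(\cdot,t)\|_{L^2}\le e^{-ct}\|\gamma_0\|_{L^2}$.

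Then I would transport these conclusions back through the inverse transformation \eqref{invtrans1}. Define $w(x,t)=\gamma(x,t)+\int_0^x L(x,y)\gamma(y,t)\,\mathrm dy$; differentiating under the integral and using the kernel PDEs for $\bar L$ together with the target equation \eqref{maint1}, one checks that $w$ solves \eqref{main1}--\eqref{bc2}; regularity of $w$ follows from that of $\gamma$ and the smoothness of $L$. Uniqueness of $w$ follows from uniqueness of $\gamma$: the direct transformation \eqref{trans1} sends any plant solution to a target solution, and the target solution is unique. Finally, boundedness of the Volterra operators on $L^2$ gives constants $M_1,M_2$ with $\|w\|_{L^2}\le M_1\|\gamma\|_{L^2}$ and $\|\gamma\|_{L^2}\le M_2\|w\|_{L^2}$, whence $\|w(\cdot,t)\|_{L^2}\le M_1 e^{-ct}\|\gamma_0\|_{L^2}\le M_1M_2 e^{-ct}\|w_0\|_{L^2}$, which is the claimed exponential stability in $L^2$.

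The main obstacle I anticipate is purely bookkeeping: carefully verifying that the formal computation behind \eqref{ker1}--\eqref{bcker6} (the repeated integration by parts, the boundary terms at $x=0,1$ and on the diagonal $y=x$) is reversible, i.e.\ that a solution $\bar K$ of \eqref{ker1}--\eqref{bcker6} really does map \eqref{main1}--\eqref{bc2} into \eqref{maint1}--\eqref{tbc1}, and likewise for $\bar L$; and, relatedly, ensuring enough regularity on the initial data (the compatibility conditions) so that the classical solution $w\in C^{2,1}$ genuinely exists up to the boundary rather than only in the interior. The PDE-theoretic facts about the heat equation and the decay estimate are routine; the care is in the equivalence of the two formulations under the Volterra maps.
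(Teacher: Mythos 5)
Your proposal is correct and follows essentially the same route as the paper: invoke the Section VI well-posedness of the kernels $K$ and $L$, transfer the problem to the exponentially stable target system \eqref{maint1}--\eqref{tbc1}, and pull the decay back through the bounded, boundedly invertible Volterra transformations to get $\|w(\cdot,t)\|_{L^2}\leq C\|w_0\|_{L^2}e^{-\epsilon t}$. In fact you spell out the target-system well-posedness, regularity, and the reversibility of the integration-by-parts computation more carefully than the paper, which treats those points only implicitly.
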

\begin{proof}
The well-posedness problem can be solved by showing the transformation kernel $K(x,y)$ and the inverse kernel $L(x,y)$ are exists. This is discussed in section 6 where we show there exists a continuous twice-differentiable function $K(x,y)$ that satisfies (\ref{ker1})-(\ref{bcker6}). Remark that since the target system is exponentially stable in the $L^2$-norm, then there exists $\kappa$ and $\epsilon$ such that
\begin{eqnarray}
\|\gamma(\cdot,t)\|_{L^2}&\leq&\kappa\|\gamma(\cdot,0)\|_{L^2}e^{-\epsilon t}
\end{eqnarray}
Since the kernels are continuous, define the following upper bounds $K_{\infty},L_{\infty}>0$
\begin{eqnarray}
K_{\infty}^2&=&\max_{x,y\in[0,1]}\|K(x,y)\|_2^2\\
L_{\infty}^2&=&\max_{x,y\in[0,1]}\|L(x,y)\|_2^2
\end{eqnarray}
From (\ref{invtrans1}), we have
\begin{eqnarray}
&&\|w(\cdot,t)\|_{L^2} \nonumber\\
&\leq& \|\gamma(\cdot,t)\|_{L^2}+\left\|\int_0^x\! L(\cdot,y)\gamma(y,t) \,\mathrm{d}y\right\|_{L^2}\nonumber\\
&\leq& \|\gamma(\cdot,t)\|_{L^2}+\sqrt{\int_0^1\!\left\|\int_0^x\! L(x,y)\gamma(y,t) \,\mathrm{d}y\right\|_2^2\mathrm{d}x}\nonumber\\
&\leq& \|\gamma(\cdot,t)\|_{L^2}+\sqrt{\int_0^1\!\int_0^x\! \left\|L(x,y)\gamma(y,t)\right\|_2^2 \,\mathrm{d}y\mathrm{d}x}\nonumber\\
&\leq& \left(1+L_{\infty}\right)\|\gamma(\cdot,t)\|_{L^2}\nonumber\\
&\leq& \kappa\left(1+L_{\infty}\right)\left(1+K_{\infty}\right)\|w(\cdot,0)\|_{L^2}e^{-\epsilon t}
\end{eqnarray}
which concludes the proof.
\end{proof}

\section{BOUNDARY OBSERVER}

Since the state-feedback control law (\ref{con1}) requires measurements of the state at each point in the domain, we need to design a state observer. We consider two setups: the anti-collocated setup (when sensor and actuator are placed at the opposite ends) and the collocated setup (when sensor and actuator are placed at the same ends).

\subsection{Anti-Collocated Sensor and Actuator}

Let's assume that only $u(0,t)$ is available for measurement. Using this boundary information, it is possible to reconstruct the state in the domain. Following \cite{Andrey1}, the observer is designed for the plant is as follow
\begin{eqnarray}
\hat{w}_t(x,t)&=&\hat{w}_{xx}(x,t)+\Sigma\hat{w}(x,t)+p(x)\tilde{u}(0,t)\label{obsmain1}\\
\hat{w}_x(0,t)&=&\begin{pmatrix}
  L\tilde{u}(0,t) \\
  0
 \end{pmatrix}\;\;\;\hat{w}(1,t)=U(t)\label{obsbc4}
\end{eqnarray}
where $\tilde{u}=u-\hat{u}$. Here, the functions $p(x)=\begin{pmatrix}
  p_1(x) \\
  p_2(x)
 \end{pmatrix}$, and the constant $L$ are observer gains to be determined. Remark that the state observer is just the plant plus output injection. The objective now is to find those gains such that $\hat{u}$ converges to $u$ as time goes to infinity. To this end, consider the following error system
\begin{eqnarray}
\tilde{w}_t(x,t)&=&\tilde{w}_{xx}(x,t)+\Sigma\tilde{w}(x,t)-p(x)\tilde{u}(0,t)\label{anerror1}\\
\tilde{w}_x(0,t)&=&\begin{pmatrix}
  -L\tilde{u}(0,t) \\
  0
 \end{pmatrix}\;\;\;\hat{w}(1,t)=\mathbf{0}\label{anerrorbc4}
\end{eqnarray}
Using the following (invertible) transformation
\begin{eqnarray}
\tilde{w}(x,t)=\tilde{\gamma}(x,t)-\int_0^x\! P(x,y)\tilde{\gamma}(y,t)\,\mathrm{d}y\label{acoltrans1}
\end{eqnarray}
where $P(x,y)= \begin{pmatrix}
  P^{uu}(x,y) & P^{uv}(x,y) \\
  P^{vu}(x,y) & P^{vv}(x,y)
 \end{pmatrix}$, we transform the error system into the following exponentially target system
\begin{eqnarray}
\tilde{\gamma}_t(x,t)&=&\tilde{\gamma}_{xx}(x,t)\label{targ1}\\
\tilde{\gamma}_x(0,t)&=&\mathbf{0}\;\;\;\tilde{\gamma}(1,t)=\mathbf{0}\label{bctarg1}
\end{eqnarray}
 By plugging the transformation (\ref{acoltrans1}) into the error system (\ref{anerror1})-(\ref{anerrorbc4}), using the target system (\ref{targ1})-(\ref{bctarg1}), and integration by parts, the kernels must satisfy the following system
\begin{eqnarray}
\bar{P}_{xx}(x,y)-\bar{P}_{yy}(x,y)&=&-\Psi\bar{P}(x,y)\label{anobsker1}\\
\bar{P}(x,x)&=&\left(0\;\frac{\lambda_1}{2}(1-x)\;\frac{\lambda_2}{2}(1-x)\;0\right)^T\\
\bar{P}(1,y)&=&\mathbf{0}\label{anobsbc4}
\end{eqnarray}
where
\begin{eqnarray}
\bar{P}(x,y)=\left(P^{uu}(x,y)\;P^{uv}(x,y)\;P^{vu}(x,y)\;P^{vv}(x,y)\right)^T
\end{eqnarray}
The gains are given by
\begin{eqnarray}
p_1(x)&=&P_y^{uu}(x,0),\;\;p_2(x)=P_y^{vu}(x,0),\;\;L=0\label{angain}
\end{eqnarray}
Hence, we can state the following theorem.
\begin{theorem}
Let $P(x,y)$ be the solution to the system (\ref{anobsker1})-(\ref{anobsbc4}). Then for any initial condition $\tilde{w}_0\in L^2(0,1)$ which compatible with the boundary condition (\ref{anerrorbc4}), the system (\ref{anerror1}) with $p_1(x)$, $p_2(x)$, and $L$ are given by (\ref{angain}), has a unique classical solution $\tilde{w}\in C^{2,1}((0,1)\times(0,\infty))$. Additionally, the origin, $\tilde{w}\equiv0$, is exponentially stable in the $L^2$-sense.
\end{theorem}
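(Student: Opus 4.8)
The plan is to follow the template of Theorem 1: transport exponential stability and $C^{2,1}$ regularity back and forth across the Volterra transformation (\ref{acoltrans1}), using that its image obeys a plain heat equation. First I would invoke Section 6 for the existence of a continuous, twice-differentiable kernel $\bar{P}(x,y)$ on the triangle $\Upsilon$ solving the Goursat-type system (\ref{anobsker1})--(\ref{anobsbc4}); structurally this is the same $4\times4$ second-order hyperbolic system as the control kernel $\bar{K}$, with $-\Psi$ in place of $\Lambda$, the trace data carried on the characteristic $x=y$, and the second condition imposed on the line $x=1$ instead of $y=0$, so the well-posedness argument of Section 6 applies to it verbatim. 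Then I would verify, by substituting (\ref{acoltrans1}) into the error system (\ref{anerror1})--(\ref{anerrorbc4}) with the gains (\ref{angain}) and integrating by parts in $y$, that $\tilde{\gamma}$ satisfies exactly the target (\ref{targ1})--(\ref{bctarg1}). This substitution is the delicate part and I would carry out the boundary-term bookkeeping explicitly: it is precisely the requirement that all the $x$- and $y$-boundary contributions cancel that produces the kernel equations (\ref{anobsker1})--(\ref{anobsbc4}) and pins down $p_1(x)=P_y^{uu}(x,0)$, $p_2(x)=P_y^{vu}(x,0)$, $L=0$.

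Next I would observe that the target (\ref{targ1})--(\ref{bctarg1}) is the very same heat equation with mixed boundary data as the target (\ref{maint1})--(\ref{tbc1}) of Section III, which was already noted to be exponentially stable in the $L^2$-sense: with $V(t)=\tfrac12\|\tilde{\gamma}(\cdot,t)\|_{L^2}^2$ one gets $\dot{V}(t)=-\|\tilde{\gamma}_x(\cdot,t)\|_{L^2}^2$ after one integration by parts using $\tilde{\gamma}_x(0,t)=\mathbf{0}$, $\tilde{\gamma}(1,t)=\mathbf{0}$, and Poincaré's inequality for functions with a Neumann end at $0$ and a Dirichlet end at $1$ gives $\|\tilde{\gamma}_x\|_{L^2}^2\ge\tfrac{\pi^2}{4}\|\tilde{\gamma}\|_{L^2}^2$, so $\|\tilde{\gamma}(\cdot,t)\|_{L^2}\le\kappa\|\tilde{\gamma}(\cdot,0)\|_{L^2}e^{-\epsilon t}$ for some $\kappa,\epsilon>0$ (here $\kappa=1$, $\epsilon=\pi^2/4$); separation of variables yields the same bound together with the explicit solution.

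Finally I would use invertibility. Transformation (\ref{acoltrans1}) is a Volterra operator in $x$, hence boundedly invertible on $L^2(0,1)$; I would write the inverse as $\tilde{\gamma}(x,t)=\tilde{w}(x,t)+\int_0^x Q(x,y)\tilde{w}(y,t)\,\mathrm{d}y$ with a continuous kernel $Q$ constructed by the same successive-approximation procedure used for the inverse control kernel $L$ in Section 6, set $P_\infty^2=\max_{x,y\in[0,1]}\|P(x,y)\|_2^2$ and $Q_\infty^2=\max_{x,y\in[0,1]}\|Q(x,y)\|_2^2$, and run the Cauchy--Schwarz estimate from the proof of Theorem 1 to obtain $\|\tilde{w}(\cdot,t)\|_{L^2}\le(1+P_\infty)\|\tilde{\gamma}(\cdot,t)\|_{L^2}$ and $\|\tilde{\gamma}(\cdot,t)\|_{L^2}\le(1+Q_\infty)\|\tilde{w}(\cdot,t)\|_{L^2}$. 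Chaining these with the target estimate gives $\|\tilde{w}(\cdot,t)\|_{L^2}\le\kappa(1+P_\infty)(1+Q_\infty)\|\tilde{w}(\cdot,0)\|_{L^2}e^{-\epsilon t}$, i.e. exponential stability of $\tilde{w}\equiv0$ in $L^2$; and since the heat target has a unique classical solution for data compatible with (\ref{bctarg1}) while (\ref{acoltrans1}) and its inverse preserve $C^{2,1}$ regularity, $\tilde{w}\in C^{2,1}((0,1)\times(0,\infty))$ is the unique classical solution of (\ref{anerror1}).

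The $L^2$ bookkeeping above is routine; the main obstacle is the part deferred to Section 6 — well-posedness of the $4\times4$ Goursat kernel system and existence/continuity of the inverse kernel $Q$ (needed so that $Q_\infty<\infty$) — which is handled by the successive-approximation / Marcum Q-function machinery recalled in the introduction. Within this proof the one step I would not wave away is the substitution of (\ref{acoltrans1}) into the error dynamics: I would track every boundary term generated at $x=0$ and $x=1$, since their cancellation is what simultaneously yields (\ref{anobsker1})--(\ref{anobsbc4}) and the gain formulas (\ref{angain}), and in particular makes $L=0$ the correct choice.
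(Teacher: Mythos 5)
Your proposal is correct and follows exactly the argument the paper intends: the paper states this theorem without an explicit proof, relying on the same template as Theorem 1 (kernel existence from the Section 6 successive-approximation argument, the Volterra transformation (\ref{acoltrans1}) mapping the error system to the exponentially stable heat target (\ref{targ1})--(\ref{bctarg1}), and invertibility of the transformation to chain the $L^2$ bounds), which is what you reconstruct, including the correct handling of the transformation direction (bounding $\tilde{w}$ by $\tilde{\gamma}$ via $P$ and $\tilde{\gamma}(\cdot,0)$ by $\tilde{w}(\cdot,0)$ via the inverse kernel). The only cosmetic point is that your inverse-kernel symbol clashes with the $Q(x)$ already used in Lemma 1 of Section V, and your claim that Section 6 applies ``verbatim'' to the condition $\bar{P}(1,y)=\mathbf{0}$ is the same without-loss-of-generality step the paper itself takes.
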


\subsection{Collocated Sensor and Actuator}

If the only available measurement is at $x=1$, the observer design is slightly different. Let us assume $u_x(1,t)$ as the measurement. The state observer is given by
\begin{eqnarray}
\hat{w}_t(x,t)&=&\hat{w}_{xx}(x,t)+\Sigma\hat{w}(x,t)+p(x)\tilde{u}_x(1,t)\label{colerror1}\\
\hat{w}_x(0,t)&=&\mathbf{0}\;\;\;\hat{w}(1,t)=U(t)+\begin{pmatrix}
L\tilde{u}_x(1,t)\\
0
\end{pmatrix}\label{colerrorbc4}
\end{eqnarray}
Remark that the output injection is placed at the same boundary at which the actuator is located. Let us consider the following error system
\begin{eqnarray}
\tilde{w}_t(x,t)&=&\tilde{w}_{xx}(x,t)+\Sigma\tilde{w}(x,t)-p(x)\tilde{u}_x(1,t)\label{hehe1}\\
\tilde{w}_x(0,t)&=&\mathbf{0}\;\;\;\tilde{w}(1,t)=\begin{pmatrix}
-L\tilde{u}_x(1,t)\\
0
\end{pmatrix}\label{hehebc1}
\end{eqnarray}
Using the following (invertible) transformation
\begin{eqnarray}
\tilde{w}(x,t)=\tilde{\gamma}(x,t)-\int_x^1\! P(x,y)\tilde{\gamma}(y,t)\,\mathrm{d}y\label{coltrans1}
\end{eqnarray}
we transform the error system into (\ref{targ1})-(\ref{bctarg1}). By plugging the transformation (\ref{coltrans1}) into the error system (\ref{hehe1})-(\ref{hehebc1}), using the target system (\ref{targ1})-(\ref{bctarg1}), and integration by parts, the kernels must satisfy the following system
\begin{eqnarray}
\bar{P}_{xx}(x,y)-\bar{P}_{yy}(x,y)&=&-\Psi\bar{P}(x,y)\label{colobsker1}\\
\bar{P}(x,x)&=&\left(0\;-\frac{\lambda_1}{2}x\;-\frac{\lambda_2}{2}x\;0\right)^T\\
\bar{P}_x(0,y)&=&\mathbf{0}\label{colobsbc4}
\end{eqnarray}
The gains are given by
\begin{eqnarray}
p_1(x)&=&P^{uu}(x,1),\;\;p_2(x)=P^{vu}(x,1),\;\;L=0\label{colgain}
\end{eqnarray}
\begin{theorem}
Let $P(x,y)$ be the solution to the system (\ref{colobsker1})-(\ref{colobsbc4}). Then for any initial condition $\tilde{w}_0\in L^2(0,1)$ which compatible with the boundary condition (\ref{colerrorbc4}), the system (\ref{colerror1}) with $p_1(x)$, $p_2(x)$, and $L$ are given by (\ref{colgain}), has a unique classical solution $\tilde{w}\in C^{2,1}((0,1)\times(0,\infty))$. Additionally, the origin, $\tilde{w}\equiv0$, is exponentially stable in the $L^2$-sense.
\end{theorem}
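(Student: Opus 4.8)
The plan is to follow the same three-step argument used in the proof of Theorem 1, adapted to the collocated geometry: first invoke the well-posedness of the kernel equations, then transfer exponential $L^2$-stability from the target system (\ref{targ1})-(\ref{bctarg1}) through the transformation (\ref{coltrans1}) and its inverse, and finally deduce classical regularity from the smoothing property of the heat equation.

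For the first step I would quote Section VI, where a continuous, twice continuously differentiable kernel $P(x,y)$ solving (\ref{colobsker1})-(\ref{colobsbc4}) on the triangle $\{(x,y):0\leq x\leq y\leq1\}$ is produced. A direct verification then confirms the setup: substituting (\ref{coltrans1}) into the target system (\ref{targ1})-(\ref{bctarg1}), using the error dynamics (\ref{hehe1})-(\ref{hehebc1}), and integrating by parts twice in $y$, one collects (a) a term on the characteristic $y=x$ forcing $\bar{P}(x,x)=(0\;-\tfrac{\lambda_1}{2}x\;-\tfrac{\lambda_2}{2}x\;0)^T$, (b) a boundary term at $x=0$ that is annihilated precisely by $\bar{P}_x(0,y)=\mathbf{0}$ together with $P(0,0)=0$ and the Neumann condition $\tilde{w}_x(0,t)=\mathbf{0}$, and (c) boundary terms at $y=1$ which are cancelled by choosing the gains as in (\ref{colgain}), i.e. $p_1(x)=P^{uu}(x,1)$, $p_2(x)=P^{vu}(x,1)$, $L=0$. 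What remains in the interior is exactly $\bar{P}_{xx}-\bar{P}_{yy}=-\Psi\bar{P}$.

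Next, the target system decouples into two scalar heat equations with a Neumann condition at $x=0$ and a Dirichlet condition at $x=1$; taking $V(t)=\tfrac12\|\tilde{\gamma}(\cdot,t)\|_{L^2}^2$ gives $\dot{V}(t)=-\|\tilde{\gamma}_x(\cdot,t)\|_{L^2}^2\leq-\tfrac{\pi^2}{4}\|\tilde{\gamma}(\cdot,t)\|_{L^2}^2$ by the Poincar\'e inequality for functions vanishing at $x=1$, so $\|\tilde{\gamma}(\cdot,t)\|_{L^2}\leq\kappa\|\tilde{\gamma}(\cdot,0)\|_{L^2}e^{-\epsilon t}$ for suitable $\kappa,\epsilon>0$. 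The transformation (\ref{coltrans1}) is of Volterra type in the reversed variable $\xi=1-x$, hence boundedly invertible on $L^2(0,1)$; write the inverse as $\tilde{\gamma}(x,t)=\tilde{w}(x,t)+\int_x^1\! Q(x,y)\tilde{w}(y,t)\,\mathrm{d}y$ with continuous kernel $Q$ solving the analogous Goursat problem. With $P_{\infty}^2=\max_{x,y\in[0,1]}\|P(x,y)\|_2^2$ and $Q_{\infty}^2=\max_{x,y\in[0,1]}\|Q(x,y)\|_2^2$, the same Cauchy--Schwarz-plus-Fubini estimate as in the proof of Theorem 1 yields
\begin{eqnarray}
\|\tilde{w}(\cdot,t)\|_{L^2}&\leq&(1+P_{\infty})\|\tilde{\gamma}(\cdot,t)\|_{L^2}\nonumber\\
&\leq&\kappa(1+P_{\infty})(1+Q_{\infty})\|\tilde{w}(\cdot,0)\|_{L^2}e^{-\epsilon t}\nonumber
\end{eqnarray}
which is the asserted exponential decay.

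Finally, since $\tilde{\gamma}$ solves a heat equation it lies in $C^{2,1}((0,1)\times(0,\infty))$; because the bounded transformation (\ref{coltrans1}) and its continuous inverse map functions compatible with (\ref{colerrorbc4}) of class $C^{2,1}$ to functions of the same class, $\tilde{w}$ inherits this regularity, and uniqueness follows from uniqueness for the target system plus invertibility. The step I expect to demand the most care is the invertibility of (\ref{coltrans1}): unlike the controller transformation (\ref{trans1}), the integral runs over $[x,1]$, so one must recast it as a Volterra equation of the second kind in $\xi=1-x$ and check that the successive-approximation series for $Q$ converges on the triangle — only then are the stability transfer and the $C^{2,1}$ claim fully justified. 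A secondary subtlety, and the genuine difference from Theorem 2, is confirming that the derived boundary datum $\bar{P}_x(0,y)=\mathbf{0}$ (rather than a condition fixing $\bar{P}$ on the far characteristic) is exactly what cancels the Neumann boundary term at $x=0$ in the error dynamics, and that Section VI's well-posedness argument indeed covers this mixed Goursat/Neumann data.
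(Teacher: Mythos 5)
Your proposal is correct and follows essentially the same route the paper intends: the paper gives no separate proof for this theorem, relying on the Theorem~1 pattern (kernel well-posedness from Section~VI, exponential stability of the target system (\ref{targ1})--(\ref{bctarg1}), and transfer of the $L^2$ bound through the invertible Volterra transformation (\ref{coltrans1}) and its inverse), which is exactly what you spell out, with the collocated-geometry details (integration over $[x,1]$, gains $P^{uu}(x,1)$, $P^{vu}(x,1)$, and the condition $\bar{P}_x(0,y)=\mathbf{0}$) handled correctly.
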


\section{OUTPUT-FEEDBACK STABILIZATION}

The exponentially convergent observer developed in the last section is independent of the control input and can be used with any controller. In this section we combine it with the backstepping controller developed in section 2 to solve the output-feedback problems.

\subsection{Anti-collocated setup}

For anti-collocated setup, we use the following transformation
\begin{eqnarray}
\hat{\gamma}(x,t)=\hat{w}(x,t)-\int_0^x\! K(x,y)\hat{w}(y,t)\,\mathrm{d}y\label{outtrans1}
\end{eqnarray}

\begin{lemma}
The transformations (\ref{outtrans1}) and (\ref{acoltrans1}) map the observer (\ref{obsmain1})-(\ref{obsbc4}) and (\ref{anerror1})-(\ref{anerrorbc4}) into the following system
\begin{eqnarray}
\hat{\gamma}_t(x,t)&=&\hat{\gamma}_{xx}(x,t)+\left(p(x)-Q(x)\right)\tilde{\alpha}(0,t)\\
\hat{\gamma}_x(0,t)&=&\mathbf{0}\;\;\;\hat{\gamma}(1,t)=\mathbf{0}\\
\tilde{\gamma}_t(x,t)&=&\tilde{\gamma}_{xx}(x,t)\\
\tilde{\gamma}_x(0,t)&=&\mathbf{0}\;\;\;\tilde{\gamma}(1,t)=\mathbf{0}
\end{eqnarray}
where
\begin{eqnarray}
Q(x)=\begin{pmatrix}
Q_1(x)\\
Q_2(x)
\end{pmatrix}=\int_0^x\! K(x,y)p(y)\,\mathrm{d}y
\end{eqnarray}
\end{lemma}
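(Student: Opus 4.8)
The plan is to handle the two transformations separately, since the $\tilde\gamma$-equation requires essentially no new work. The transformation (\ref{acoltrans1}), together with the kernel equations (\ref{anobsker1})--(\ref{anobsbc4}), was constructed precisely so that the error system (\ref{anerror1})--(\ref{anerrorbc4}) is carried onto the homogeneous heat equation (\ref{targ1})--(\ref{bctarg1}); so for the last two displayed equations of the lemma I would simply cite that construction (the content behind Theorem~2). The substantive part is therefore the first two equations, those for $\hat\gamma$.

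First I would differentiate (\ref{outtrans1}) in $t$ and substitute the observer dynamics (\ref{obsmain1}) for $\hat w_t$ both at $x$ and (under the integral) at $y$, and separately differentiate (\ref{outtrans1}) twice in $x$ via Leibniz' rule, which produces the usual boundary contributions at $y=x$ involving $K(x,x)$ and its derivatives. Forming $\hat\gamma_t-\hat\gamma_{xx}$ and integrating the term $\int_0^x K(x,y)\hat w_{yy}(y,t)\,dy$ by parts twice, the coefficient of each occurrence of $\hat w$ collapses exactly as in the derivation of Section~III: the terms multiplying $\hat w(y,t)$ inside the integral cancel because $\bar K$ solves (\ref{ker1}); the boundary contribution at $y=x$ cancels because of the Goursat condition (\ref{gau}); the contribution at $y=0$ cancels because of (\ref{bcker6}) together with $\hat w_x(0,t)=\mathbf 0$ (recall $L=0$ from (\ref{angain})). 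The only term not cancelled is the one carrying the output injection, namely $p(x)\tilde u(0,t)-\bigl(\int_0^x K(x,y)p(y)\,dy\bigr)\tilde u(0,t)$.

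It remains to put this source into the stated form and to check the boundary conditions. By definition $\int_0^x K(x,y)p(y)\,dy=Q(x)$, and evaluating (\ref{acoltrans1}) at $x=0$ gives $\tilde w(0,t)=\tilde\gamma(0,t)$, hence $\tilde u(0,t)=\tilde\alpha(0,t)$, so the source is $(p(x)-Q(x))\tilde\alpha(0,t)$ as claimed. For the boundary data: at $x=0$, $\hat\gamma_x(0,t)=\hat w_x(0,t)-K(0,0)\hat w(0,t)=\mathbf 0$, since $\bar K(0,0)=\mathbf 0$ by (\ref{gau}) and $\hat w_x(0,t)=\mathbf 0$; at $x=1$, $\hat\gamma(1,t)=\hat w(1,t)-\int_0^1 K(1,y)\hat w(y,t)\,dy$, which vanishes once the certainty-equivalence controller $U(t)=\int_0^1 K(1,y)\hat w(y,t)\,dy$ (the observer-based version of (\ref{con1})) is used for $\hat w(1,t)=U(t)$.

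I do not expect a genuine obstacle here; the one point needing care is bookkeeping of the off-diagonal coupling $\Sigma$, i.e.\ regrouping the cross terms $\int_0^x K(x,y)\Sigma\hat w(y,t)\,dy$ and the boundary terms generated by the two integrations by parts into the vector form $\Lambda\bar K$ and the Goursat data of (\ref{ker1})--(\ref{gau}). One must verify that this is the very same regrouping already used to obtain the kernel equations, which it is, because the $\hat w$-equation carries exactly the plant operator $\partial_{xx}+\Sigma$ with the same boundary structure as (\ref{main1})--(\ref{bc2}); once that identification is made the cancellation is automatic and only the $\tilde\alpha(0,t)$-source remains.
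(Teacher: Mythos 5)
Your proposal is correct, and it is exactly the computation the paper intends: the paper actually states this lemma without proof, and your argument reproduces the same integration-by-parts/kernel-cancellation bookkeeping used in Section III (with $K_{xx}-K_{yy}=K\Sigma$, the Goursat data killing the $\hat{w}(x,t)$ terms, (\ref{bcker6}) and $L=0$ killing the $y=0$ terms), leaving only the source $(p(x)-Q(x))\tilde{u}(0,t)$. Your two additional observations — that $\tilde{u}(0,t)=\tilde{\alpha}(0,t)$ by evaluating (\ref{acoltrans1}) at $x=0$, and that $\hat{\gamma}(1,t)=\mathbf{0}$ requires the certainty-equivalence controller $U(t)=\int_0^1 K(1,y)\hat{w}(y,t)\,\mathrm{d}y$ — are precisely the points the paper leaves implicit, and both are right.
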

\begin{lemma}
For any $(\hat{w}_0,\tilde{w}_0)\in L^2(0,1)$, the system $(\hat{w},\tilde{w})$ is exponentially stable in the $L^2$-norm.
\end{lemma}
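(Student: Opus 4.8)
The plan is to pass to the transformed coordinates $(\hat\gamma,\tilde\gamma)$ furnished by the preceding lemma, exploit the cascade structure there, prove exponential decay of $\|\hat\gamma(\cdot,t)\|_{L^2}$ and $\|\tilde\gamma(\cdot,t)\|_{L^2}$, and then transfer the estimate back to $(\hat w,\tilde w)$. Since the transformations \eqref{outtrans1} and \eqref{acoltrans1} and their inverses have continuous kernels on the unit square (Section~6, together with the bounded-kernel argument already used in the proof of Theorem~1), we have $\|\hat w(\cdot,t)\|_{L^2}+\|\tilde w(\cdot,t)\|_{L^2}\le c(\|\hat\gamma(\cdot,t)\|_{L^2}+\|\tilde\gamma(\cdot,t)\|_{L^2})$ and $\|\hat\gamma(\cdot,0)\|_{L^2}+\|\tilde\gamma(\cdot,0)\|_{L^2}\le c'(\|\hat w_0\|_{L^2}+\|\tilde w_0\|_{L^2})$ for constants $c,c'>0$; hence it suffices to establish exponential stability of the $(\hat\gamma,\tilde\gamma)$ system.

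The $\tilde\gamma$-subsystem \eqref{targ1}--\eqref{bctarg1} is autonomous: with $V_{\tilde\gamma}(t)=\tfrac12\|\tilde\gamma(\cdot,t)\|_{L^2}^2$ one gets $\dot V_{\tilde\gamma}=-\|\tilde\gamma_x(\cdot,t)\|_{L^2}^2$ (the boundary terms vanish by $\tilde\gamma_x(0,t)=\mathbf 0$, $\tilde\gamma(1,t)=\mathbf 0$), and Poincaré's inequality gives $\dot V_{\tilde\gamma}\le-2\epsilon V_{\tilde\gamma}$, so $\|\tilde\gamma(\cdot,t)\|_{L^2}\le e^{-\epsilon t}\|\tilde\gamma(\cdot,0)\|_{L^2}$. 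For the $\hat\gamma$-subsystem the forcing enters through the trace $\tilde\alpha(0,t)$, so I would use the weighted Lyapunov functional $V(t)=\tfrac12\|\hat\gamma(\cdot,t)\|_{L^2}^2+\tfrac b2\|\tilde\gamma(\cdot,t)\|_{L^2}^2$ with $b>0$ to be fixed large. Differentiating along the dynamics yields $\dot V=-\|\hat\gamma_x\|_{L^2}^2-b\|\tilde\gamma_x\|_{L^2}^2+\tilde\alpha(0,t)\langle\hat\gamma,p-Q\rangle_{L^2}$. I bound the cross term by Cauchy--Schwarz, then Poincaré on $\hat\gamma$ and Young, producing $\tfrac14\|\hat\gamma_x\|_{L^2}^2+C\,\tilde\alpha(0,t)^2$; next I estimate the trace by the interpolation inequality $\tilde\alpha(0,t)^2=-2\int_0^1\tilde\alpha\,\tilde\alpha_x\,\mathrm dx\le 2\|\tilde\gamma\|_{L^2}\|\tilde\gamma_x\|_{L^2}$ (using $\tilde\alpha(1,t)=0$), and apply Young once more so that the resulting $\|\tilde\gamma_x\|_{L^2}^2$ term is absorbed into $-b\|\tilde\gamma_x\|_{L^2}^2$ for $b$ large enough. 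What remains is $\dot V\le-\tfrac34\|\hat\gamma_x\|_{L^2}^2-c_1\|\tilde\gamma\|_{L^2}^2$ with $c_1>0$, and a final use of Poincaré on $\hat\gamma$ gives $\dot V\le-2\eta V$ for some $\eta>0$. Thus $V(t)\le V(0)e^{-2\eta t}$, which yields exponential decay of $\|\hat\gamma(\cdot,t)\|_{L^2}$ and $\|\tilde\gamma(\cdot,t)\|_{L^2}$, and combining with the first paragraph proves the claim.

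The main obstacle I anticipate is exactly the appearance of the boundary value $\tilde\alpha(0,t)$ in the $\hat\gamma$-equation: it is not controlled by $\|\tilde\gamma(\cdot,t)\|_{L^2}$, so a straightforward variation-of-constants bound would need a pointwise-in-$x$ estimate on $\tilde\gamma$ that is unavailable at $t=0$ for merely $L^2$ data and would force an appeal to parabolic smoothing with an integrable singularity at $t=0$. The Lyapunov argument above circumvents this, but hinges on the trick of reserving part of the dissipation $-b\|\tilde\gamma_x\|_{L^2}^2$ to absorb the trace term and on choosing the weight $b$ correctly in terms of $\|p-Q\|_{L^2}$ and the Poincaré constant. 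The collocated case is handled in the same way, with $P$, \eqref{acoltrans1} replaced by their collocated counterparts \eqref{coltrans1} and \eqref{colobsker1}--\eqref{colobsbc4}.
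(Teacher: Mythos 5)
Your proposal is correct and follows essentially the same route as the paper: a weighted Lyapunov functional in the transformed coordinates $(\hat{\gamma},\tilde{\gamma})$ with a large weight on the observer-error part, absorption of the trace term $\tilde{\alpha}(0,t)$ into the dissipation via Young and Poincar\'e, and transfer back to $(\hat{w},\tilde{w})$ through the invertible backstepping transformations. The only cosmetic differences are that you bound the trace by the interpolation $\tilde{\alpha}(0,t)^2\leq 2\|\tilde{\gamma}\|_{L^2}\|\tilde{\gamma}_x\|_{L^2}$ and weight all of $\tilde{\gamma}$, whereas the paper weights only $\tilde{\alpha}$ and in effect uses $\tilde{\alpha}(0,t)^2\leq\|\tilde{\alpha}_x\|_{L^2}^2$.
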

\begin{proof}
Consider the following Lyapunov function
\begin{eqnarray}
V(t)&=&\frac{A}{2}\int_0^1\! \tilde{\alpha}(x,t)^2\,\mathrm{d}x+\frac{1}{2}\int_0^1\! \hat{\alpha}(x,t)^2\,\mathrm{d}x\nonumber\\
&&+\frac{1}{2}\int_0^1\! \tilde{\beta}(x,t)^2\,\mathrm{d}x+\frac{1}{2}\int_0^1\! \hat{\beta}(x)^2\,\mathrm{d}x\label{lya}
\end{eqnarray}
where $A$ is a positive constant to be determined latter. Taking the time derivative of (\ref{lya}), we have
\begin{eqnarray}
\dot{V}(t) &=& -A\int_0^1\! \tilde{\alpha}_x(x,t)^2\,\mathrm{d}x-\int_0^1\! \hat{\alpha}_x(x,t)^2\,\mathrm{d}x\nonumber\\
&&-\int_0^1\! \tilde{\beta}_x(x,t)^2\,\mathrm{d}x-\int_0^1\! \hat{\beta}_x(x,t)^2\,\mathrm{d}x\nonumber\\
&&+\int_0^1\! \hat{\alpha}(x,t)\left(p_1(x)-Q_1(x)\right)\tilde{\alpha}(0,t)\,\mathrm{d}x\nonumber\\
&&+\int_0^1\! \hat{\beta}(x,t)\left(p_2(x)-Q_2(x)\right)\tilde{\alpha}(0,t)\,\mathrm{d}x
\end{eqnarray}
Next, we estimate the last two terms
\begin{eqnarray}
&&\tilde{\alpha}(0,t)\int_0^1\! \hat{\alpha}(x,t)\left(p_1(x)-Q_1(x)\right)\,\mathrm{d}x\nonumber\\
&\leq&\frac{1}{4}\int_0^1\! \hat{\alpha}_x(x,t)^2\,\mathrm{d}x+C^2\int_0^1\! \tilde{\alpha}_x(x,t)^2\,\mathrm{d}x
\end{eqnarray}
\begin{eqnarray}
&&\tilde{\alpha}(0,t)\int_0^1\! \hat{\beta}(x,t)\left(p_2(x)-Q_2(x)\right)\,\mathrm{d}x\nonumber\\
&\leq&\frac{1}{4}\int_0^1\! \hat{\beta}_x(x,t)^2\,\mathrm{d}x+D^2\int_0^1\! \tilde{\alpha}_x(x,t)^2\,\mathrm{d}x
\end{eqnarray}
where
\begin{eqnarray}
C=\max_{x\in[0,1]}\left\{p_1(x)-Q_1(x)\right\}\\
D=\max_{x\in[0,1]}\left\{p_2(x)-Q_2(x)\right\}
\end{eqnarray}
then we have
\begin{eqnarray}
\dot{V} &\leq& -\left(A-C^2-D^2\right)\int_0^1\! \tilde{\alpha}_x(x,t)^2\,\mathrm{d}x\nonumber\\
&&-\frac{3}{4}\int_0^1\! \hat{\alpha}_x(x,t)^2\,\mathrm{d}x-\int_0^1\! \tilde{\beta}_x(x,t)^2\,\mathrm{d}x\nonumber\\
&&-\frac{3}{4}\int_0^1\! \hat{\beta}_x(x,t)^2\,\mathrm{d}x\nonumber\\
&\leq& -\frac{1}{4}\left(A-C^2-D^2\right)\int_0^1\! \tilde{\alpha}_x(x,t)^2\,\mathrm{d}x\nonumber\\
&&-\frac{1}{8}\int_0^1\! \hat{\alpha}_x(x,t)^2\,\mathrm{d}x-\frac{1}{8}\int_0^1\! \tilde{\beta}_x(x,t)^2\,\mathrm{d}x\nonumber\\
&&-\frac{1}{8}\int_0^1\! \hat{\beta}_x(x,t)^2\,\mathrm{d}x
\end{eqnarray}
Taking $A=2\left(C^2+D^2\right)$, we get $\dot{V}\leq-\frac{1}{4}V$. This shows the system $(\hat{\gamma},\tilde{\gamma})$ is exponentially stable. Since the transformation (\ref{outtrans1}) is invertible, this concludes the proof.
\end{proof}
The summary of the output-feedback stabilization for anti-collocated setup can be stated in the following theorem.
\begin{theorem}
Let $K(1,y)$ can be obtained from (\ref{ker1})-(\ref{bcker6}), and $p_1(x)$, $p_2(x)$ and $L$ are given by (\ref{angain}). Then for any $(w_0,\hat{w}_0)\in L^2(0,1)$ the system consisting of plant (\ref{main1})-(\ref{bc2}), the controller
\begin{eqnarray}
U(t)=\int_0^1\! K(1,y)\hat{w}(y,t)\,\mathrm{d}y
\end{eqnarray}
and the observer
\begin{eqnarray}
\hat{w}_t(x,t)&=&\hat{w}_{xx}(x,t)+\Sigma\hat{w}(x,t)+p(x)\tilde{u}(0,t)\\
\hat{w}_x(0,t)&=&\mathbf{0}\;\;\;\hat{w}(1,t)=\int_0^1\! K(1,y)\hat{w}(y,t)\,\mathrm{d}y
\end{eqnarray}
has a unique classical solution $\left(w,\hat{w}\right)\in C^{2,1}((0,1)\times(0,\infty))$ and is exponentially stable at the origins, i.e., $\left(w,\hat{w}\right)\equiv0$, in the $L^2$-sense.
\end{theorem}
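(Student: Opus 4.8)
The plan is to decompose the closed-loop state as $w=\hat w+\tilde w$ and treat the pair $(\hat w,\tilde w)$ as the fundamental object, exactly as in Lemma~2. First I would observe that, after substituting $U(t)=\int_0^1 K(1,y)\hat w(y,t)\,\mathrm{d}y$, the dynamics of $(\hat w,\tilde w)$ form an autonomous cascade: the error $\tilde w$ evolves according to (\ref{anerror1})--(\ref{anerrorbc4}) with the gains (\ref{angain}) and does not see $w$ or $\hat w$, while $\hat w$ is driven by the scalar output-injection signal $p(x)\tilde u(0,t)$. Consequently the well-posedness and stability of the closed loop reduce entirely to those of $(\hat w,\tilde w)$, and the plant state is recovered a posteriori by $w=\hat w+\tilde w$.

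For well-posedness I would work in the transformed coordinates $(\hat\gamma,\tilde\gamma)$ provided by Lemma~1. The $\tilde\gamma$-system is the pure heat equation (\ref{targ1})--(\ref{bctarg1}); for $\tilde w_0\in L^2(0,1)$ compatible with (\ref{anerrorbc4}), standard parabolic regularity (eigenfunction expansion / analytic semigroup theory) yields a unique $\tilde\gamma\in C^{2,1}((0,1)\times(0,\infty))$, so in particular the trace $\tilde\alpha(0,t)$ is a well-defined function, continuous (indeed smooth) and bounded for $t>0$. Feeding $\tilde\alpha(0,t)$ into the $\hat\gamma$-system, a heat equation with homogeneous boundary data and the bounded forcing $\left(p(x)-Q(x)\right)\tilde\alpha(0,t)$, again gives a unique classical solution $\hat\gamma\in C^{2,1}$. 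Inverting the bounded, boundedly invertible Volterra transformations (\ref{acoltrans1}) and (\ref{outtrans1}) — whose kernels $P$ and $K$ are $C^2$ on $\Upsilon$ by Section~VI — transports this regularity back to $\tilde w$ and $\hat w$, hence to $w=\hat w+\tilde w$; the compatibility of $\hat w_0$ with (\ref{obsbc4}) is inherited from the compatibility of $w_0$ and $\hat w_0$ with the stated boundary conditions.

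For exponential stability I would simply invoke Lemma~2: in the $L^2$-norm the pair $(\hat\gamma,\tilde\gamma)$ satisfies $\dot V\le-\tfrac14 V$ and therefore decays exponentially, and since (\ref{acoltrans1}) and (\ref{outtrans1}) are bounded with bounded inverses, $(\hat w,\tilde w)$ decays exponentially in $L^2$ at the same rate up to a multiplicative constant. The triangle inequality $\|w(\cdot,t)\|_{L^2}\le\|\hat w(\cdot,t)\|_{L^2}+\|\tilde w(\cdot,t)\|_{L^2}$ then gives exponential decay of $w$, and since $(\hat w_0,\tilde w_0)=(\hat w_0,\,w_0-\hat w_0)$ the decay constant depends only on $\|w_0\|_{L^2}$ and $\|\hat w_0\|_{L^2}$.

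The step I expect to be the main obstacle is the well-posedness bookkeeping rather than the stability estimate. One must verify that the output-injection forcing term is regular enough (continuous in $t$, bounded on $[0,1]$) for classical parabolic theory to apply, that the boundary traces $\tilde u(0,t)$ and $\tilde\alpha(0,t)$ are legitimately defined by the $C^{2,1}$ regularity of the solutions, and that the compatibility conditions at $t=0$ required for a genuinely classical (as opposed to mild) solution are preserved under the change of variables. The stability content is already contained in Lemma~2, so what remains is essentially a careful but routine verification of these regularity and compatibility details.
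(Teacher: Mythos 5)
Your proposal is correct and follows essentially the same route as the paper: the theorem is proved there by combining Lemma~1 (the cascade structure in the $(\hat{\gamma},\tilde{\gamma})$ coordinates), Lemma~2 (the Lyapunov estimate $\dot{V}\leq-\tfrac{1}{4}V$ for the transformed pair), the invertibility of the backstepping transformations, and the kernel well-posedness from Section~VI, which is exactly the decomposition $w=\hat{w}+\tilde{w}$ and separation argument you give. Your additional remarks on regularity and compatibility bookkeeping are a reasonable elaboration of details the paper leaves implicit, but they do not change the underlying argument.
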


\subsection{Collocated setup}

For collocated setup, applying the same steps as in the previous subsection, we have the following result.

\begin{theorem}
Let $K(1,y)$ can be obtained from (\ref{ker1})-(\ref{bcker6}), and $p_1(x)$, $p_2(x)$, and $L$ are given by (\ref{colgain}). Then for any $(w_0,\hat{w}_0)\in L^2(0,1)$ the system consisting of plant (\ref{main1})-(\ref{bc2}), the controller
\begin{eqnarray}
U(t)=\int_0^1\! K(1,y)\hat{w}(y,t)\,\mathrm{d}y
\end{eqnarray}
and the observer
\begin{eqnarray}
\hat{w}_t(x,t)&=&\hat{w}_{xx}(x,t)+\Sigma\hat{w}(x,t)+p(x)\tilde{u}_x(1,t)\\
\hat{w}_x(0,t)&=&\mathbf{0}\;\;\;\hat{w}(1,t)=\int_0^1\! K(1,y)\hat{w}(y,t)\,\mathrm{d}y
\end{eqnarray}
has a unique classical solution $\left(w,\hat{w}\right)\in C^{2,1}((0,1)\times(0,\infty))$ and is exponentially stable at the origins, i.e., $\left(w,\hat{w}\right)\equiv0$, in the $L^2$-sense.
\end{theorem}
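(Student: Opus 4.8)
The plan is to transcribe the anti-collocated development of Section V almost verbatim and to isolate the one genuinely new difficulty: the output-injection error now enters through the Neumann trace $\tilde u_x(1,t)$ instead of the Dirichlet trace $\tilde u(0,t)$. Throughout, $\|\cdot\|$ denotes the $L^2(0,1)$ norm. First I would dispose of well-posedness exactly as in Theorems 1 and 3: a unique classical solution exists once the control kernel $K$ of (\ref{ker1})-(\ref{bcker6}) with its inverse $L$, and the collocated observer kernel $P$ of (\ref{colobsker1})-(\ref{colobsbc4}) with its Volterra inverse, are known to exist (Section VI; the observer-kernel estimate is the same as for $\bar K$). Since $\partial_{xx}+\Sigma$ with the stated boundary conditions generates an analytic semigroup and the output-injection and feedback operators are bounded, $(w,\hat w)\in C^{2,1}((0,1)\times(0,\infty))$, and by parabolic smoothing the solution is spatially smooth for every $t>0$ — a point I use below.

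Next I would establish the collocated analogue of Lemma 1. Differentiating (\ref{coltrans1}) in $x$ and evaluating at $x=1$ (the integral vanishes and $\tilde\gamma(1,t)=\mathbf{0}$) gives $\tilde u_x(1,t)=\tilde\alpha_x(1,t)$. Plugging (\ref{outtrans1}) into the observer (\ref{colerror1})-(\ref{colerrorbc4}) with $U(t)=\int_0^1 K(1,y)\hat w(y,t)\,\mathrm{d}y$, using the kernel equations (\ref{ker1})-(\ref{bcker6}) together with $L=0$ and $K(0,0)=0$, and integrating by parts yields the cascade
\begin{eqnarray*}
\hat\gamma_t(x,t)&=&\hat\gamma_{xx}(x,t)+\bigl(p(x)-Q(x)\bigr)\tilde\alpha_x(1,t),\quad \hat\gamma_x(0,t)=\mathbf{0},\quad \hat\gamma(1,t)=\mathbf{0},\\
\tilde\gamma_t(x,t)&=&\tilde\gamma_{xx}(x,t),\quad \tilde\gamma_x(0,t)=\mathbf{0},\quad \tilde\gamma(1,t)=\mathbf{0},
\end{eqnarray*}
with $Q(x)=\int_0^x K(x,y)p(y)\,\mathrm{d}y$ — structurally identical to Lemma 1 except that the driving signal is the trace $\tilde\alpha_x(1,t)$.

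For the stability part (the collocated analogue of Lemma 2), the $L^2$-only functional of Lemma 2 no longer closes, so I would use the augmented functional
\[
V(t)=\tfrac{A}{2}\|\tilde\alpha\|^{2}+\tfrac{B}{2}\|\tilde\alpha_x\|^{2}+\tfrac12\|\hat\alpha\|^{2}+\tfrac12\|\tilde\beta\|^{2}+\tfrac12\|\hat\beta\|^{2}.
\]
Using $\tilde\alpha_x(0,t)=0$ and $\tilde\alpha_t(1,t)=0$ the $B$-term contributes the extra dissipation $-B\|\tilde\alpha_{xx}\|^{2}$; the two cross terms are handled by Young's and Poincaré's inequalities with $C=\max_x|p_1-Q_1|$, $D=\max_x|p_2-Q_2|$ as in the anti-collocated proof, followed by the trace estimate $\tilde\alpha_x(1,t)^{2}\le(1+\varepsilon^{-1})\|\tilde\alpha_x\|^{2}+\varepsilon\|\tilde\alpha_{xx}\|^{2}$. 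Taking $\varepsilon$ small and then $A$ large gives $\dot V\le -cV$ for some $c>0$, hence exponential decay of $(\hat\gamma,\tilde\gamma)$. Since the theorem only posits $L^2$ data I would run this estimate from an arbitrary $t_0>0$ — at which $\tilde\gamma(\cdot,t_0)$ is smooth, so $V(t_0)<\infty$ — and patch it with a crude energy bound on $[0,t_0]$ (there $\tilde\alpha_x(1,s)$ is integrable in $s$ near $0$ by the smoothing estimates) to obtain exponential decay from $t=0$. Inverting the Volterra maps (\ref{outtrans1}) and (\ref{coltrans1}) transfers this to exponential $L^2$-stability of $(\hat w,\tilde w)$, and because $w=\hat w+\tilde w$ the plant state $w$ decays as well, which proves the claim.

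The main obstacle is the stability step: everything else is a mechanical copy of the anti-collocated case, but since $\tilde u_x(1,t)$ is a Neumann trace, controlling it requires either the $H^{1}$-augmented Lyapunov functional above with the interpolation/trace inequality, or equivalently the hidden-regularity bound $\int_{t_0}^{\infty} e^{2\delta t}\,\tilde\alpha_x(1,t)^{2}\,\mathrm{d}t<\infty$ for the heat semigroup inserted into a variation-of-constants estimate for $\hat\gamma$; reconciling the extra regularity this needs with the bare $L^2$ hypothesis on $(w_0,\hat w_0)$ is exactly what makes the parabolic-smoothing remark necessary.
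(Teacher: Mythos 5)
Your proposal is correct in outline and, at the one step that actually matters, more careful than the paper itself: the paper offers no written proof of this theorem, stating only that it follows by ``applying the same steps as in the previous subsection.'' Your reduction to the cascade form (the collocated analogue of Lemma 1, with $\tilde u_x(1,t)=\tilde\alpha_x(1,t)$, $Q(x)=\int_0^x K(x,y)p(y)\,\mathrm{d}y$, and the boundary conditions closing because $L=0$ and $K(0,0)=0$) is exactly the paper's intended route. Where you genuinely depart from the paper is the stability step, and rightly so: in the anti-collocated proof of Lemma 2 the injected signal is the Dirichlet trace $\tilde\alpha(0,t)$, which is dominated by $\|\tilde\alpha_x\|_{L^2}$ since $\tilde\alpha(1,t)=0$, so the purely $L^2$ Lyapunov functional (\ref{lya}) closes; a literal repetition for the collocated case would stall, because the Neumann trace $\tilde\alpha_x(1,t)$ is not controlled by $\|\tilde\alpha_x\|_{L^2}$. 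Your augmented functional with the $\frac{B}{2}\|\tilde\alpha_x\|^2$ term, the extra dissipation $-B\|\tilde\alpha_{xx}\|^2$ (legitimate since $\tilde\alpha_x(0,t)=0$ and $\tilde\alpha_t(1,t)=0$), and the trace inequality $\tilde\alpha_x(1,t)^2\le(1+\varepsilon^{-1})\|\tilde\alpha_x\|^2+\varepsilon\|\tilde\alpha_{xx}\|^2$ supply precisely the missing ingredient, and your device of starting the estimate at $t_0>0$ and patching with an $L^1$-in-time bound on $\tilde\alpha_x(1,\cdot)$ (which does hold, since the pure heat error system gives $|\tilde\alpha_x(1,t)|\lesssim t^{-3/4}\|\tilde\gamma_0\|_{L^2}$, integrable near $0$, though not square-integrable) reconciles the argument with the bare $L^2$ hypothesis on the data. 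In short: the paper buys brevity by asserting symmetry with the anti-collocated case; your version buys an actually complete Lyapunov argument at the cost of an $H^1$ layer and a smoothing/patching step, and it is the more defensible of the two.
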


\section{WELL POSEDNESS OF THE KERNEL EQUATION}

Remark that all kernel equations almost have the same structures. They are consisted of two separate hyperbolic equations. Therefore, without loss of generality, let us consider the first two kernel equations in (\ref{ker1})-(\ref{bcker6})
\begin{eqnarray}
K_{xx}^{uu}(x,y)&-&K_{yy}^{uu}(x,y)=\lambda_2K^{uv}(x,y)\label{din}\\
K_{xx}^{uv}(x,y)&-&K_{yy}^{uv}(x,y)=\lambda_1K^{uu}(x,y)\\
K^{uu}(x,x)&=&0\;\;\;\;\;\;\;\;\;\;K_y^{uu}(x,0)=0\\
K^{uv}(x,x)&=&-\frac{\lambda_1}{2}x\;\;\;K_y^{uv}(x,0)=0\label{lov}
\end{eqnarray}
The idea is to change the PDEs into integral equations and solving them using a successive approximation method.
\begin{lemma}
Let $\xi = x+y$, $\eta=x-y$, and denote $G(\xi,\eta)=K^{uu}(x,y)$ and $H(\xi,\eta)=K^{uv}(x,y)$. Then (\ref{din})-(\ref{lov}) can be transformed into the following integral equations
\begin{eqnarray}
G(\xi,\eta)&=&\frac{\lambda_2}{2}\int_0^{\eta}\! \int_0^{\tau}\! H(\tau,s)\,\mathrm{d}s\,\mathrm{d}\tau\nonumber\\
&&+\frac{\lambda_2}{4}\int_{\eta}^{\xi}\! \int_0^{\eta}\! H(\tau,s)\,\mathrm{d}s\,\mathrm{d}\tau\label{Gint}\\
H(\xi,\eta)&=&-\frac{\lambda_1}{4}(\xi+\eta)+\frac{\lambda_1}{2}\int_0^{\eta}\! \int_0^{\tau}\! G(\tau,s)\,\mathrm{d}s\,\mathrm{d}\tau\nonumber\\
&&+\frac{\lambda_1}{4}\int_{\eta}^{\xi}\! \int_0^{\eta}\! G(\tau,s)\,\mathrm{d}s\,\mathrm{d}\tau\label{Hint}
\end{eqnarray}
for $(\xi,\eta)\in\Xi=\{(\xi,\eta)\in\mathbb{R}^2|0\leq \xi\leq 2,\; 0\leq\eta\leq\min(\xi,2-\xi)\}$.
\end{lemma}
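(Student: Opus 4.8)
The plan is to carry out the standard change of variables from the physical coordinates $(x,y)$ on the triangle $\Upsilon$ to characteristic coordinates $(\xi,\eta)=(x+y,x-y)$, under which the operator $\partial_{xx}-\partial_{yy}$ becomes $4\partial_{\xi\eta}$. First I would write $K^{uu}$ and $K^{uv}$ in the new variables as $G(\xi,\eta)=K^{uu}\!\big(\tfrac{\xi+\eta}{2},\tfrac{\xi-\eta}{2}\big)$ and $H(\xi,\eta)=K^{uv}\!\big(\tfrac{\xi+\eta}{2},\tfrac{\xi-\eta}{2}\big)$, check that the domain $\Upsilon=\{0\le y\le x\le 1\}$ maps onto $\Xi=\{0\le\xi\le 2,\ 0\le\eta\le\min(\xi,2-\xi)\}$, and rewrite the two PDEs as $4G_{\xi\eta}=\lambda_2 H$ and $4H_{\xi\eta}=\lambda_1 G$. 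The boundary data then reads: the characteristic $y=x$ becomes $\eta=0$, so $G(\xi,0)=0$ and $H(\xi,0)=-\tfrac{\lambda_1}{4}\xi$ (from $K^{uv}(x,x)=-\tfrac{\lambda_1}{2}x$ with $x=\xi/2$); the Neumann condition $K_y^{\,uu}(x,0)=0$, $K_y^{\,uv}(x,0)=0$ translates, via $\partial_y=\partial_\xi-\partial_\eta$, into a condition on the diagonal $\eta=\xi$ relating the $\xi$- and $\eta$-derivatives of $G$ and $H$ there.

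Next I would integrate the hyperbolic equations. Integrating $4G_{\xi\eta}=\lambda_2 H$ in $\eta$ from $0$ to $\eta$, and using $G(\xi,0)=0$, gives $G_\xi(\xi,\eta)=\tfrac{\lambda_2}{4}\int_0^\eta H(\xi,s)\,\mathrm ds$; integrating again in $\xi$ is where the Neumann/diagonal condition is needed to fix the "constant of integration" $G_\xi$ along a characteristic. The split structure of the right-hand side of (\ref{Gint}) — a term $\tfrac{\lambda_2}{2}\int_0^\eta\!\int_0^\tau$ plus a term $\tfrac{\lambda_2}{4}\int_\eta^\xi\!\int_0^\eta$ — is exactly what one gets by integrating from the diagonal $\xi=\eta$ (where the boundary condition lives) rather than from $\xi=0$: on the segment $\xi\in[\eta,?]$ one picks up the simple double integral with the $\tfrac14$ coefficient, while the reflection off the diagonal contributes the $\tfrac12$-weighted piece. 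I would perform this integration carefully, substitute the diagonal condition derived in the first step, and reorganize the iterated integrals (Fubini / changing the order) until the two expressions (\ref{Gint})–(\ref{Hint}) emerge; the inhomogeneous term $-\tfrac{\lambda_1}{4}(\xi+\eta)=-\tfrac{\lambda_1}{2}x$ in (\ref{Hint}) comes directly from carrying $H(\xi,0)=-\tfrac{\lambda_1}{4}\xi$ through the same manipulation.

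I expect the main obstacle to be bookkeeping the boundary contributions correctly: there are two boundary conditions (one on $\eta=0$, one encoding the Neumann condition on $\eta=\xi$), and the geometry of $\Xi$ means the inner $\eta$-integration and the outer $\xi$-integration have $\eta$-dependent limits, so the region of integration is itself a sub-triangle whose description changes depending on whether $\xi\le 1$ or $\xi\ge 1$. Keeping the limits $0\le s\le\tau$, $0\le\tau\le\eta$ versus $0\le s\le\eta$, $\eta\le\tau\le\xi$ consistent — and verifying that the resulting $G,H$ do satisfy both original boundary conditions upon differentiation — is the delicate part; the rest is routine. Finally I would note, to set up the successive-approximation argument that presumably follows this lemma, that (\ref{Gint})–(\ref{Hint}) is a linear Volterra-type system with bounded kernels on the compact set $\Xi$, so the iteration $G^{(0)}=0$, $H^{(0)}=-\tfrac{\lambda_1}{4}(\xi+\eta)$, $(G^{(n+1)},H^{(n+1)})=\mathcal T(G^{(n)},H^{(n)})$ converges uniformly, but that convergence claim belongs to the next step, not to this lemma.
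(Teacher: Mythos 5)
Your plan follows the paper's proof essentially verbatim: pass to characteristic coordinates where the system becomes $4G_{\xi\eta}=\lambda_2 H$, $4H_{\xi\eta}=\lambda_1 G$ with $G(\xi,0)=0$, $H(\xi,0)=-\tfrac{\lambda_1}{4}\xi$ and the Neumann condition carried to the diagonal $\eta=\xi$, then integrate first in $\eta$ and next in $\xi$ starting from the diagonal, using the diagonal condition to supply the remaining trace. The only step you leave implicit---and the one the paper spells out---is that the diagonal condition enters through $\tfrac{\mathrm{d}}{\mathrm{d}\xi}G(\xi,\xi)=G_{\xi}(\xi,\xi)+G_{\eta}(\xi,\xi)=2G_{\xi}(\xi,\xi)$ (likewise for $H$), whose integration from $0$ to $\eta$ yields $G(\eta,\eta)$ and $H(\eta,\eta)$, i.e.\ exactly the $\tfrac{\lambda_2}{2}$- and $\tfrac{\lambda_1}{2}$-weighted double integrals and the $-\tfrac{\lambda_1}{2}\eta$ term that combines with $-\tfrac{\lambda_1}{4}(\xi-\eta)$ to give $-\tfrac{\lambda_1}{4}(\xi+\eta)$; no Fubini-type reordering is actually required.
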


\begin{proof}
By direct calculation we transform (\ref{din})-(\ref{lov}) into
\begin{eqnarray}
G_{\xi\eta}(\xi,\eta)&=&\frac{\lambda_2}{4}H(\xi,\eta)\label{G}\\
H_{\xi\eta}(\xi,\eta)&=&\frac{\lambda_1}{4}G(\xi,\eta)\label{H}\\
G(\xi,0)&=&0\;\;\;\;\;\;\;\;\;\;G_{\xi}(\xi,\xi)=G_{\eta}(\xi,\xi)\label{Gfirst}\\
H(\xi,0)&=&-\frac{\lambda_1}{4}\xi\;\;\;H_{\xi}(\xi,\xi)=H_{\eta}(\xi,\xi)\label{Hfirst}
\end{eqnarray}
Integrating (\ref{G}) and (\ref{H}) with respect to $\eta$ from $0$ to $\eta$, using the first boundary conditions of (\ref{Gfirst}) and (\ref{Hfirst}), yield
\begin{eqnarray}
G_{\xi}(\xi,\eta)&=&\frac{\lambda_2}{4}\int_0^{\eta}\! H(\xi,s)\,\mathrm{d}s\\
H_{\xi}(\xi,\eta)&=&-\frac{\lambda_1}{4}+\frac{\lambda_1}{4}\int_0^{\eta}\! G(\xi,s)\,\mathrm{d}s
\end{eqnarray}
Integrating the above equations with respect to $\xi$ from $\eta$ to $\xi$, we have
\begin{eqnarray}
G(\xi,\eta)&=&G(\eta,\eta)+\frac{\lambda_2}{4}\int_{\eta}^{\xi}\! \int_0^{\eta}\! H(\tau,s)\,\mathrm{d}s\,\mathrm{d}\tau\label{firstint}\\
H(\xi,\eta)&=&H(\eta,\eta)-\frac{\lambda_1}{4}(\xi-\eta)\nonumber\\
&&+\frac{\lambda_1}{4}\int_{\eta}^{\xi}\! \int_0^{\eta}\! G(\tau,s)\,\mathrm{d}s\,\mathrm{d}\tau\label{secondint}
\end{eqnarray}
To find $G(\eta,\eta)$ and $H(\eta,\eta)$, we use the second boundary conditions of (\ref{Gfirst}) and (\ref{Hfirst})
\begin{eqnarray}
\frac{\mathrm{d}}{\mathrm{d}\xi}G(\xi,\xi) &=& 2G_{\xi}(\xi,\xi)=\frac{\lambda_2}{2}\int_0^{\xi}\! H(\xi,s)\,\mathrm{d}s\\
\frac{\mathrm{d}}{\mathrm{d}\xi}H(\xi,\xi) &=& 2H_{\xi}(\xi,\xi)=-\frac{\lambda_1}{2}+\frac{\lambda_1}{2}\int_0^{\xi}\! G(\xi,s)\,\mathrm{d}s
\end{eqnarray}
Integrating the above equations for $\xi=\eta$, we have
\begin{eqnarray}
G(\eta,\eta) &=&\frac{\lambda_2}{2}\int_0^{\eta}\! \int_0^{\tau}\! H(\tau,s)\,\mathrm{d}s\,\mathrm{d}\tau\\
H(\eta,\eta) &=&-\frac{\lambda_1}{2}\eta+\frac{\lambda_1}{2}\int_0^{\eta}\! \int_0^{\tau}\! G(\tau,s)\,\mathrm{d}s\,\mathrm{d}\tau
\end{eqnarray}
Plugging this into (\ref{firstint})-(\ref{secondint}) yields (\ref{Gint})-(\ref{Hint}).
\end{proof}
Equation (\ref{Gint}) and (\ref{Hint}) are solved using the method of successive approximations.
\begin{theorem}
System (\ref{din})-(\ref{lov}) has a unique $C^2(\Upsilon)$ solution and the bound on the solutions are given by
\begin{eqnarray}
|K^{uu}(x,y)|\leq Me^{2Mx}\;\;\;\;\;|K^{uv}(x,y)|\leq Me^{2Mx}
\end{eqnarray}
where $M$ is a sufficiently large constant.
\end{theorem}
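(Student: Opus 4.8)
The plan is to discard the PDE form and work entirely with the integral equations \eqref{Gint}--\eqref{Hint} produced by the previous lemma, solving them by the method of successive approximations. I would set $G^{0}\equiv0$ and $H^{0}(\xi,\eta)\equiv-\tfrac{\lambda_{1}}{4}(\xi+\eta)$, and define $(G^{n+1},H^{n+1})$ by substituting $(G^{n},H^{n})$ into the right-hand sides of \eqref{Gint}--\eqref{Hint}. Since the only inhomogeneous term sits in the equation for $H$ and the operators on the right are built from iterated double integrals over the diamond $\Xi$, the successive differences $\Delta G^{n}=G^{n+1}-G^{n}$ and $\Delta H^{n}=H^{n+1}-H^{n}$ obey a coupled homogeneous recursion in which each step inserts two more nested integrations. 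A routine induction then gives a bound of the form $\max\{\sup_{\Xi}|\Delta G^{n}|,\sup_{\Xi}|\Delta H^{n}|\}\le C\,(\bar{\lambda}\,\xi)^{n}/n!$, with $\bar\lambda$ depending only on $|\lambda_{1}|,|\lambda_{2}|$; hence $\sum_{n}\Delta G^{n}$ and $\sum_{n}\Delta H^{n}$ converge absolutely and uniformly on $\Xi$, so $G=\lim_{n}G^{n}$ and $H=\lim_{n}H^{n}$ are continuous and satisfy \eqref{Gint}--\eqref{Hint}.

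Next I would recover regularity by bootstrapping: because $G$ and $H$ are continuous, the right-hand sides of \eqref{Gint}--\eqref{Hint} are differentiable in $\xi$ and $\eta$, and differentiating them reproduces \eqref{G}--\eqref{H} together with the boundary and compatibility conditions \eqref{Gfirst}--\eqref{Hfirst}; in particular $G_{\xi\eta}=\tfrac{\lambda_{2}}{4}H$ and $H_{\xi\eta}=\tfrac{\lambda_{1}}{4}G$ are continuous, so $(G,H)\in C^{2}(\Xi)$. Undoing the linear, smooth, invertible change of variables $\xi=x+y$, $\eta=x-y$ then shows $K^{uu},K^{uv}\in C^{2}(\Upsilon)$ and that they solve \eqref{din}--\eqref{lov}. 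Uniqueness follows from the same estimate: the difference of two solutions satisfies the homogeneous integral equations, hence is dominated by $C(\bar\lambda\,\xi)^{n}/n!$ for every $n$, and therefore vanishes identically.

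For the explicit bound I would fix $M$ as large as needed, at least a universal multiple of $\max\{|\lambda_{1}|,|\lambda_{2}|,1\}$, and prove by induction on $n$ that $|G^{n}(\xi,\eta)|\le Me^{M\xi}$ and $|H^{n}(\xi,\eta)|\le Me^{M\xi}$ uniformly in $n$, using that each double integral over $\Xi$ contributes at most a factor $M\xi$ and that the series $\sum_{k}(M\xi)^{k}/k!$ closes the induction; passing to the limit and using $\xi=x+y\le 2x$ on $\Upsilon$ yields $|K^{uu}(x,y)|\le Me^{2Mx}$ and $|K^{uv}(x,y)|\le Me^{2Mx}$. The main obstacle is purely bookkeeping but genuinely needs care: because this is a $2\times2$ coupled system on the non-rectangular domain $\Xi$ rather than a scalar problem on a rectangle, one must verify that every iterated integral stays inside $\Xi$ and that the estimates for the two components feed one another consistently at each step; once that is arranged the factorial decay is automatic. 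Finally I would remark that the same argument applies verbatim to the other three pairs of kernel equations (for $\bar L$ and for the two observer kernels $\bar P$), since they share the same structure and differ only in the signs of $\lambda_{1},\lambda_{2}$ and the placement of the Goursat data.
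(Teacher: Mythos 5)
Your proposal is correct and follows essentially the same route as the paper: successive approximations applied to the integral equations \eqref{Gint}--\eqref{Hint}, a factorial bound $\sim M^{n+1}(\xi+\eta)^{n}/n!$ on the increments giving uniform convergence and the exponential bound (with $\xi+\eta=2x$ yielding $Me^{2Mx}$), and uniqueness by iterating the homogeneous estimate. Your bootstrapping argument for $C^{2}$ regularity is in fact a slightly more careful justification than the paper's remark that each increment lies in $C^{2}(\Xi)$, but the overall argument is the same.
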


\begin{proof}
Let $
J=\begin{pmatrix}
G\\
H
\end{pmatrix}
$
and define the following sequence
\begin{eqnarray}
J^0(\xi,\eta)&=&0\\
J^{n+1}(\xi,\eta)&=&\Theta(\xi,\eta)+\Omega[J^{n}](\xi,\eta)\label{keng}
\end{eqnarray}
where $\Theta(\xi,\eta)=\begin{pmatrix}
0\\
-\frac{1}{4}(\xi+\eta)
\end{pmatrix}$. For $n\geq0$ define the increment $\Delta J^{n+1}(\xi,\eta)=J^{n+1}(\xi,\eta)-J^n(\xi,\eta)$, with $\Delta J^1(\xi,\eta)=\Theta(\xi,\eta)$. Since $\Omega$ is linear, then
\begin{eqnarray}
\Delta J^{n+1}(\xi,\eta)=\Omega[\Delta J^n](\xi,\eta)\label{ais}
\end{eqnarray}
If the limit exists, then $J(\xi,\eta)=\lim_{n\rightarrow\infty}J^n(\xi,\eta)$ is the solution to the integral equations, or using the definition of $\Delta J^{n+1}(\xi,\eta)$, it follows that
\begin{eqnarray}
J(\xi,\eta) = \sum_{n=0}^{\infty}\Delta J^{n+1}(\xi,\eta)\label{seri}
\end{eqnarray}
Suppose
\begin{eqnarray}
|\Delta J^n(\xi,\eta)|\leq M^{n+1}\frac{(\xi+\eta)^n}{n!}\label{induc}
\end{eqnarray}
Then from (\ref{ais})and (\ref{Gint}), we have
\begin{eqnarray}
|\Delta G^{n+1}(\xi,\eta)|&\leq&\frac{M^{n+1}}{n!}\left\{\frac{\lambda_2}{2}\int_0^{\eta}\! \int_0^{\tau}\! (\tau+s)^n\,\mathrm{d}s\,\mathrm{d}\tau\right.\nonumber\\
&&\left.+\frac{\lambda_2}{4}\int_{\eta}^{\xi}\! \int_0^{\eta}\! (\tau+s)^n\,\mathrm{d}s\,\mathrm{d}\tau\right\}\nonumber\\
&\leq&\frac{M^{n+1}}{n!}\frac{3}{4}\lambda_2\frac{(\xi+\eta)^{n+1}}{n+1}\nonumber\\
&\leq&M^{n+2}\frac{(\xi+\eta)^{n+1}}{(n+1)!}\label{mia}
\end{eqnarray}
Similarly, from (\ref{ais}) and (\ref{Hint}), we have
\begin{eqnarray}
|\Delta H^{n+1}(\xi,\eta)|&\leq&M^{n+2}\frac{(\xi+\eta)^{n+1}}{(n+1)!}
\end{eqnarray}
Hence
\begin{eqnarray}
|\Delta J^{n+1}(\xi,\eta)|\leq M^{n+2}\frac{(\xi+\eta)^{n+1}}{(n+1)!}
\end{eqnarray}
Therefore, by induction (\ref{induc}) holds and the series (\ref{seri}) converges uniformly in $\Xi$ and bounded by $|J(\xi,\eta)|\leq Me^{M(\xi+\eta)}$. Furthermore, remark that from (\ref{Gint})-(\ref{Hint}), $\Delta J^{n+1}\in C^2(\Xi)$. Hence, the solution $J(\xi,\eta)$ is a twice continuously differentiable function. For uniqueness, let $J_1(\xi,\eta)$ and $J_2(\xi,\eta)$ are different solution of (\ref{keng}). Define $\tilde{J}(\xi,\eta)=J_1(\xi,\eta)-J_2(\xi,\eta)$, then $|\tilde{J}(\xi,\eta)|\leq2Me^{2M}$. By following the same estimates as in (\ref{mia}), yields
\begin{eqnarray}
|\tilde{J}(\xi,\eta)|\leq2M^{n+1}e^{2M}\frac{(\xi,\eta)^n}{n!}\rightarrow_{n\rightarrow\infty}0
\end{eqnarray}
Thus, $\tilde{J}\equiv0$, which means that (\ref{seri}) is a unique solution.
\end{proof}

Here, we give an example for $\lambda_1=20$ and $\lambda_2=10$. \textbf{Fig.~\ref{kuu}} and \textbf{Fig.~\ref{kuv}} are the solution of the kernel equations (\ref{din})-(\ref{lov}), respectively. The figures show the kernels are smooth functions. The gains for the full-state feedback law (\ref{con1}) is given in \textbf{Fig.~\ref{gain}}.

\begin{figure}[h!]
  \centering
      \includegraphics[width=0.5\textwidth]{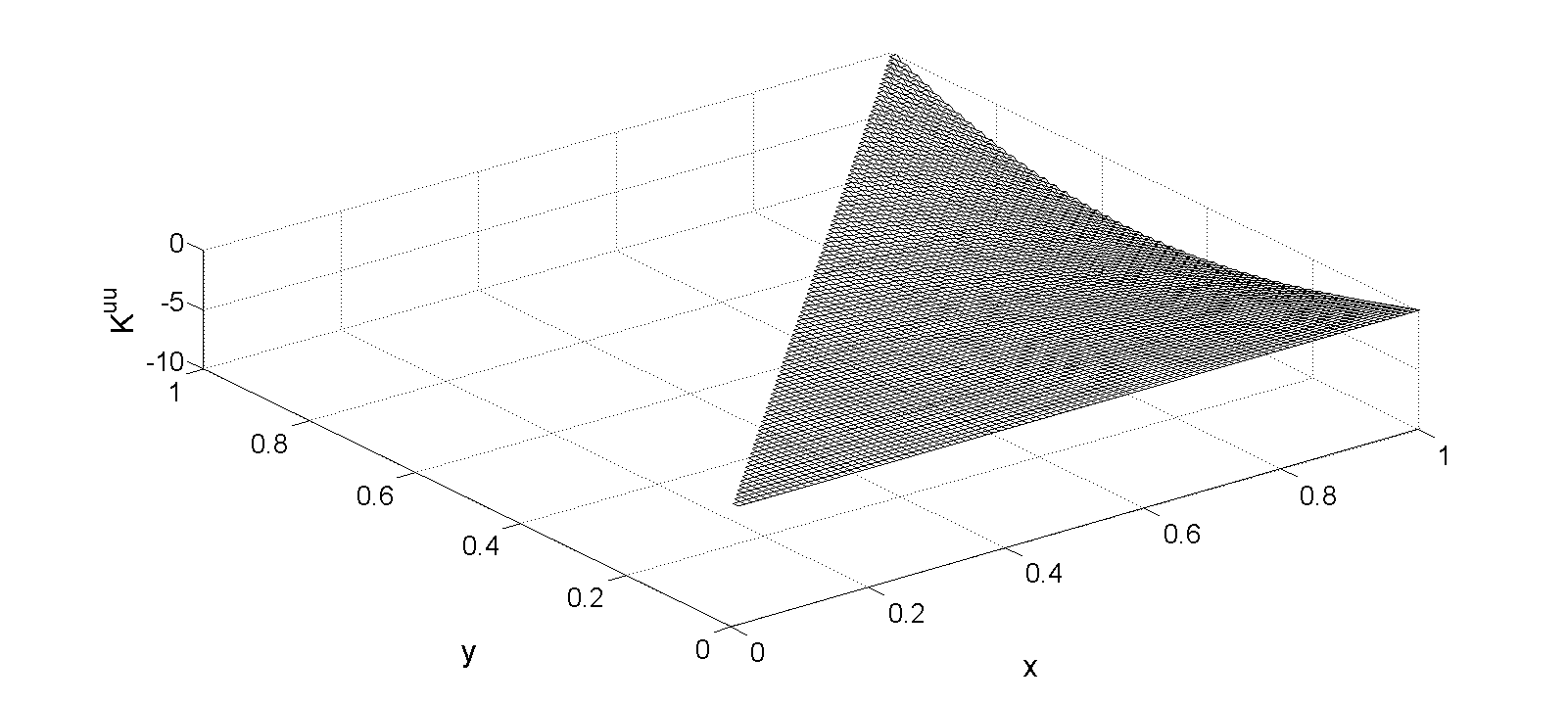}
  \caption{Kernel $K^{uu}(x,y)$.}
\label{kuu}
\end{figure}

\begin{figure}[h!]
  \centering
      \includegraphics[width=0.5\textwidth]{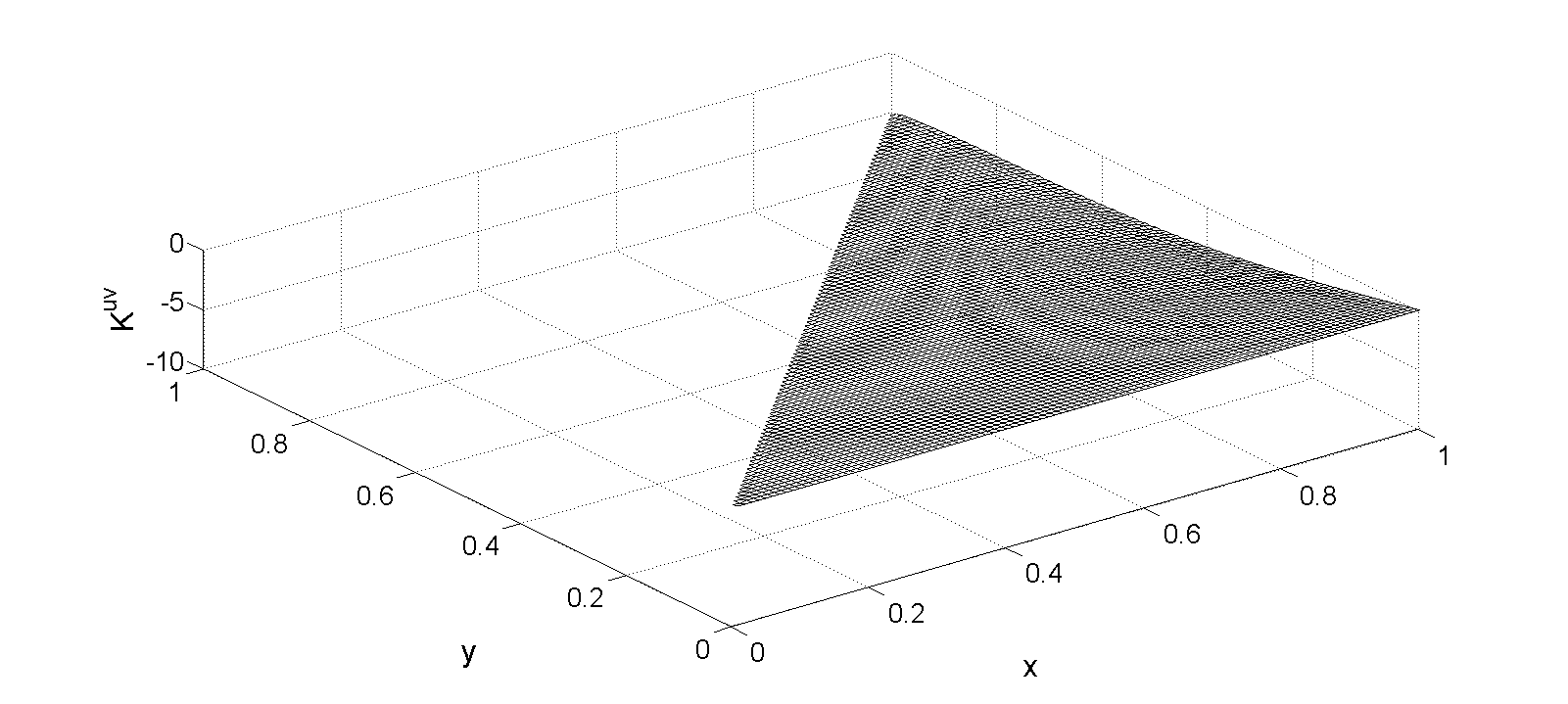}
  \caption{Kernel $K^{uv}(x,y)$.}
\label{kuv}
\end{figure}

\begin{figure}[h!]
  \centering
      \includegraphics[width=0.5\textwidth]{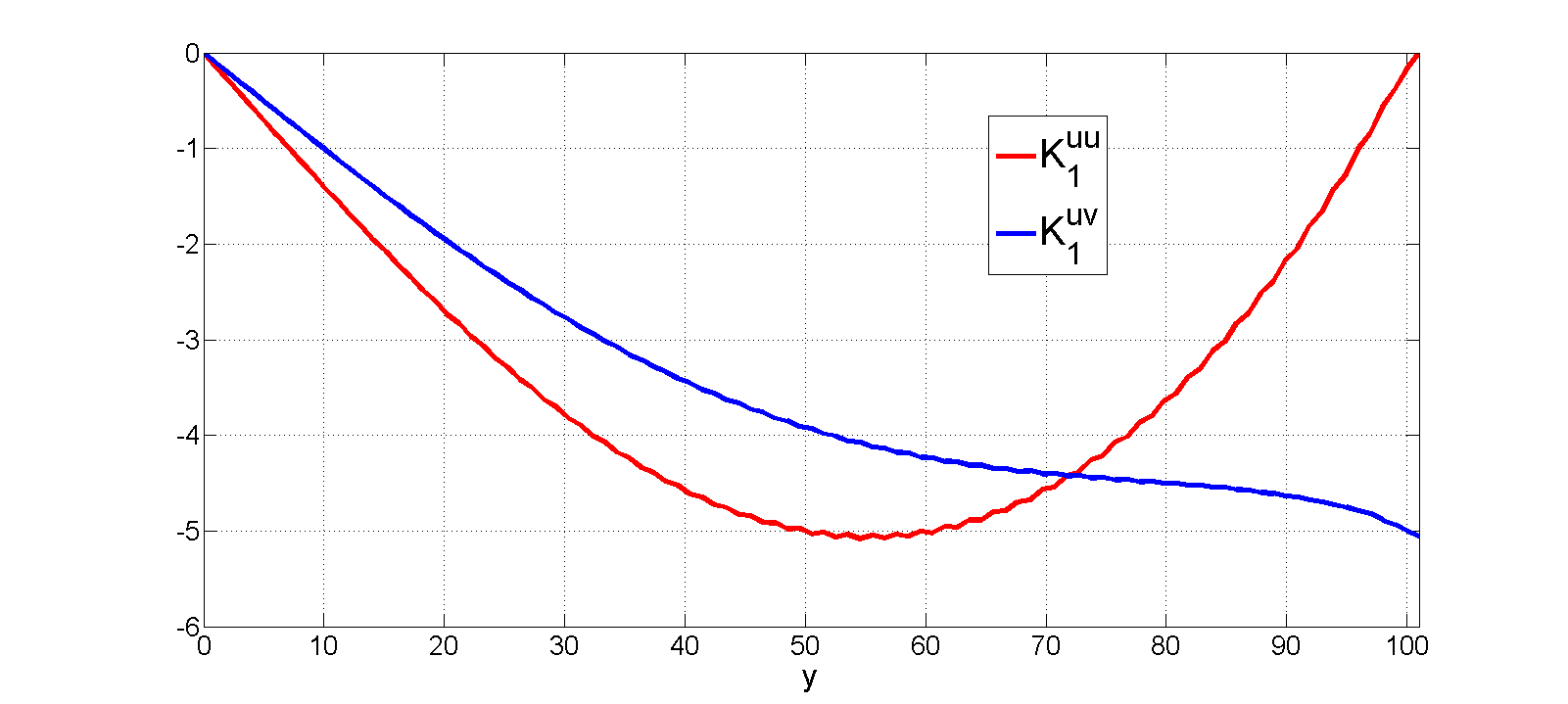}
  \caption{Gain $K^{uu}(1,y)$ and $K^{uv}(1,y)$.}
\label{gain}
\end{figure}

\section{CONCLUSIONS}

We have solved boundary output-feedback stabilization problem for a class of two-component linear parabolic systems for both anti-collocated and collocated setup where Volterra integral transformations of the second kind are employed. The main challenge is to show existence and invertibility of the transformations for both control and observer designs. The well-posedness of the kernel equations is proved by transforming the PDEs into integral equations and using the method of successive approximations. In this paper, we assume the components to have the same diffusion coefficients; hence, a natural step will be to study parabolic systems with different coefficients.


\begin{thebibliography}{99}
\balance

\bibitem{Cris}
P. Christoﬁdes, \emph{Nonlinear and Robust Control of Partial Differential Equation Systems: Methods and Applications to Transport—Reaction Processes}, Birkhauser, Boston, 2001.

\bibitem{Lasiecka}
I. Lasiecka, Boundary Control of Parabolic Systems: Finite-Element Approximation, \emph{Applied Mathematics and Optimization}, 6:\penalty0 31--62, 1980.

\bibitem{Tri}
R. Triggiani, Boundary Feedback Stabilization of Parabolic Equations, \emph{Applied Mathematics and Optimization}, 6:\penalty0 201--220, 1980.

\bibitem{Seid}
T.I. Seidman, Two Results on Exact Boundary Control of Parabolic Equations, \emph{Applied Mathematics and Optimization}, 11:\penalty0 145--152, 1984.

\bibitem{Meurer}
T. Meurer and A. Kugi, Tracking Control for Boundary Controlled Parabolic PDEs with Varying Parameters: Combining Backstepping and Differential Flatness, \emph{Automatica}, 45:\penalty0 1182--1194, 2009.

\bibitem{Krstic}
M. Krstic and A. Smyshlyaev, \emph{Boundary Control of PDEs}, SIAM Advanced in Control and Design, Philadelphia, 2008.

\bibitem{Mail1}
A. Hasan, O.M. Aamo, and M. Krstic, Boundary Observer Design for Hyperbolic PDE-ODE Cascade Systems, \emph{Automatica}, 68:\penalty0 75--86, 2016.

\bibitem{Mail2}
A. Hasan, \emph{Adaptive Boundary Control and Observer of Linear Hyperbolic Systems with Application to Managed Pressure Drilling}, ASME Dynamic Systems and Control Conference, San Antonio, USA, 2014.

\bibitem{Mail3}
A. Hasan, \emph{Adaptive Boundary Observer for Nonlinear Hyperbolic Systems: Design and Field Testing in Managed Pressure Drilling}, American Control Conference, Chicago, USA, 2015.

\bibitem{Mail4}
A. Hasan, O.M. Aamo, and B. Foss, Boundary Control for a Class of Pseudo-parabolic Differential Equations, \emph{Systems \& Control Letters}, 62:\penalty0 63--69, 2013.

\bibitem{Mail5}
A. Hasan, B. Foss, and O.M. Aamo, \emph{Boundary Control of Long Waves in Nonlinear Dispersive Systems}, Australian Control Conference, Melbourne, Australia, 2011.

\bibitem{Mail6}
A. Hasan and B. Foss, \emph{Global Stabilization of the Generalized Burgers-Korteweg-de Vries Equation by Boundary Control}, IFAC World Congress, Milan, Italy, 2011.

\bibitem{Mail7}
A. Hasan, \emph{Output-Feedback Stabilization of the Korteweg de-Vries Equation}, Mediterranean Conference on Control and Automation, Athens, Greece, 2016.

\bibitem{Rafa}
R. Vazquez and M. Krstic, Control of 1-D Parabolic PDEs with Volterra Nonlinearities, Part I: Design, \emph{Automatica}, 44:\penalty0 2778--2790, 2008.

\bibitem{Andrey2}
A. Smyshlyaev and M. Krstic, Closed-Form Boundary State Feedbacks for a Class of 1-D Partial Integro-Differential Equations, \emph{IEEE Transaction on Automatic Control}, 49:\penalty0 2185--2202, 2004.

\bibitem{Vazquez2}
R. Vazquez and M. Krstic, \emph{Marcum Q-functions and Explicit Feedback Laws for Stabilization of Constant Coefficient 2$\times$2 Linear Hyperbolic Systems}, IEEE Conference on Decision and Control, Florence, Italy, 2013.

\bibitem{Turing}
A.M. Turing, The Chemical Basis of Morphogenesis, \emph{Philosophical Transactions of the Royal Society of London}, 237:\penalty0 37--72, 1952.

\bibitem{Keller}
E.F. Keller and L.A. Segel, Initiation of Slime Mold Aggregation Viewed as an Instability, \emph{Journal of Theoretical Biology}, 26:\penalty0 399--415, 1970.

\bibitem{Prigo}
P. Glandsdorff and I. Prigogine, \emph{Thermodynamic theory of structure, stability and fluctuations}, Wiley, New York, 1971.

\bibitem{Vazquez}
R. Vazquez, J.M. Coron, and M. Krstic, \emph{Backstepping Boundary Stabilization and State Estimation of a 2$\times$2 Linear Hyperbolic System}, IEEE Conference on Decision and Control, Orlando, 2011.

\bibitem{Andrey1}
A. Smyshlyaev and M. Krstic, Backstepping Observers for a Class of Parabolic PDEs, \emph{Systems and Control Letters}, 54:\penalty0 613--625, 2005.

\end{thebibliography}
\end{document}